\numberwithin{equation}{section}
\newtheorem{theorem}{Theorem}[section]
\newtheorem{lemma}[theorem]{Lemma}
\newtheorem{proposition}[theorem]{Proposition}
\theoremstyle{definition}
\newtheorem{remark}[theorem]{{\bf Remark}}
\newtheorem{definition}[theorem]{Definition}
\title[]{Characterization of continuous homomorphisms \\ on entire slice monogenic functions}
\author[S. Pinton]{Stefano Pinton}
\address{\hspace{-0.5cm}(SP) Politecnico di Milano, Dipartimento di Matematica, Via E. Bonardi 9, 20133 Milano, Italy}
\email{stefano.pinton@polimi.it}
\author[P. Schlosser]{Peter Schlosser}
\address{\hspace{-0.5cm}(PS) Politecnico di Milano, Dipartimento di Matematica, Via E. Bonardi, 9, 20133 Milano, Italy}
\email{pschlosser@math.tugraz.at}
\begin{document}

\begin{abstract}
This paper is inspired by a class of infinite order differential operators arising in quantum mechanics. They turned out to be an important tool in the investigation of evolution of superoscillations with respect to quantum fields equations. Infinite order differential operators act naturally on spaces of holomorphic functions or on hyperfunctions. Recently, infinite order differential operators have been considered and characterized on the spaces of entire monogenic functions, i.e., functions that are in the kernel of the Dirac operators. The focus of this paper is the characterization of infinite order differential operators that act continuously on a different class of hyperholomorphic functions, called slice hyperholomorphic functions with values in a Clifford algebra or also slice monogenic functions. This function theory has a very reach associated spectral theory and both the function theory and the operator theory in this setting are subjected to intensive investigations. Here we introduce the concept of proximate order and establish some fundamental properties of entire slice monogenic functions that are crucial for the characterization of infinite order differential operators acting on entire slice monogenic functions.
\end{abstract}

\maketitle

\noindent AMS Classification: 32A15, 32A10, 47B38. \\
\noindent \textit{Key words}: Infinite order differential operators, proximate order for slice monogenic functions, continuous homomorphisms.

\vspace{1cm}

\textbf{Acknowledgements:} S. Pinton is supported by MUR grant "Dipartimento di Eccellenza 2023-2027". P. Schlosser was funded by the Austrian Science Fund (FWF) under Grant No. J 4685-N and by the European Union -- NextGenerationEU.

\tableofcontents

\section{Introduction}\label{INTROD1}

Extensive research has been conducted on infinite order differential operators for a considerable period of time. In recent years, their significance in examining the evolution of superoscillations as initial data for the Schrödinger equation has emerged as a fundamental area of study. Superoscillatory functions arise in various fields of science and technology. In quantum mechanics, they are the result of Aharonov's weak values, as outlined in \cite{aav,abook}. Investigating their time evolution as initial data for quantum field equations represents a crucial problem in the domain of quantum mechanics. Further details can be found in the monograph \cite{acsst5}, as well as in \cite{ABCS19,acsst3,acsst6,KGField,Jussi,TRANS,genHYP,genHYPAA,PETER}. Moreover, we point out that superoscillations have been studied not only in quantum mechanics, but also in various fields including optics, antenna theory and signal processing. In particular, it is believed that they can be used to improve the resolution of imaging and spectral analysis techniques, as they allow for obtaining detailed information about structures at smaller scales than those allowed by conventional waves. Superoscillations are still an active area of study, and there are still many open questions about their nature and potential applications.

\medskip

Investigating the evolution of superoscillatory functions under the time dependent Schrödinger equation presents highly intricate problems. A natural functional analytic framework for this purpose is the space of entire functions with specific growth conditions. In fact, addressing the Cauchy problem for the Schrödinger equation with superoscillatory initial data, we are naturally
led to study infinite order differential operators.

\medskip

In order to explain how infinite order differential operators appear in quantum mechanics it is enough to consider the simple case of the free particle for Schrödinger equation with initial datum that is a superoscillatory function. Precisely, we consider the Cauchy problem
\begin{equation}\label{CPSR}
\begin{split}
i\frac{\partial\psi(x,t)}{\partial t}&=-\frac{\partial^2\psi(x,t)}{\partial x^2} \\
\psi(x,0)&=F_n(x,a):=\sum_{k=0}^n{n\choose k}\big(\frac{1+a}{2}\big)^{n-k}\big(\frac{1-a}{2}\big)^ke^{i(1-2k/n)x}
\end{split}
\end{equation}
for $a\in\mathbb{R}$, $a>1$, $x\in\mathbb{R}$ where $F_n(x,a)$ is a superoscillatory function outcome of Aharonov's weak values. In \cite{acsst3} it has been shown that the solution $\psi_n(x,t)$, can be written as:
\begin{equation*}
\psi_n(x,t)=\sum_{m=0}^\infty\frac{(it)^m}{m!}\frac{d^{2m}}{dx^{2m}}F_n(x,a)
\end{equation*}
for every $x\in\mathbb{R}$ and $t\in\mathbb{R}$ and to study the limit, as $n\to\infty$ for $\psi_n(x,t)$ entails the investigation of persistence of superoscillation during the time evolution. The crucial fact is to show the continuity of the operator
\begin{equation*}
U\Big(\frac{d}{dx}\Big):=\sum_{m=0}^\infty\frac{(it)^m}{m!}\frac{d^{2m}}{dx^{2m}}
\end{equation*}
on a class of functions that contains $F_n(x,a)$. In order to study the continuity properly, we recall that we have to extend both the operator $U(\frac{d}{dx})$ and the function $F_n(x,a)$ to the complex setting replacing the real variable $x$ by the complex variable $z$. The natural settings for this problem are the spaces $\mathcal{A}_p$, for $p\geq 1$, i.e., the space of entire functions with either order lower than $p$ or order equal to $p$ and finite type, i.e., it consists of functions $f$ for which there exist constants $B,C>0$ such that
\begin{equation}\label{ABC}
|f(z)|\leq Ce^{B|z|^p}.
\end{equation}
The convergence in these spaces is defined as follows. Let $(f_n)_{n\in\mathbb{N}},f_0\in\mathcal{A}_p$. Then $f_n\to f_0$ in $\mathcal{A}_p$ if there exists some $B>0$ such that
\begin{equation}
\lim\limits_{n\to\infty}\sup_{z\in\mathbb{C}}\Big|(f_n(z)-f_0(z))e^{-B|z|^p}\Big|=0.
\end{equation}
Proving the continuity of the infinite order differential operator $U(\frac{d}{dz})$ on $\mathcal{A}_1$ implies that
\begin{equation*}
\lim_{n\to\infty}U\Big(\frac{d}{dz}\Big)F_n(z,a)=U\Big(\frac{d}{dz}\Big)\lim_{n\to\infty}F_n(z,a).
\end{equation*}
Thus we have $\lim_{n\to\infty}\psi_n(z,t)=e^{iaz-ia^2t}$ in $\mathcal{A}_1$. Taking the restriction to the real axis for the complex variable $z$, we have $\lim_{n\to\infty}\psi_n(x,t)=e^{iax-ia^2t}$ uniformly on the compact sets of $\mathbb{R}$ for the space variable.

\medskip

More generally when we investigate the Schrödinger evolution of superoscillatory functions under different potentials $V$ we reduce the problem to the identification of infinite order differential operators of the type
\begin{equation*}
\mathcal{U}\Big(t,z,\frac{d}{dz}\Big):=\sum_{m=0}^\infty u_n(t,z;V)\frac{d^n}{dz^n}
\end{equation*}
where $z$ is a complex variable, and where the coefficients $u_n(t,z;V)$ depend on the potential $V$ and also on time $t$. The identification and the characterization of these type of operators is crucial in quantum mechanics and from the mathematical point of view it has given a new impulse to the theory of infinite order differential operators acting on holomorphic functions, see \cite{ACSS18,QS2}.

The function theory of the spaces $\mathcal A_p$ from above can also be treated in a much more general framework where the constant order $p$ is replaced by some function $\varrho(|z|)$ satisfying certain properties. In the complex setting these spaces of proximate order are introduced in \cite{V23}. It was then generalized to several complex variables in \cite{LG86}. The differential operator representation of continuous homomorphisms between the spaces of entire functions of given proximate order was only recently proven by T. Aoki and co-authors in \cite{AIO20,Aoki2}. In this paper we will generalize these results for slice monogenic functions acting on a Clifford algebra.

\medskip

In this regard there are two main theories of hyperholomorphic functions the monogenic and the slice monogenic function theory. It is important to mention that for these two classes of hyperholomorphic functions we need to define different types of infinite order differential operators involving suitable products of functions that preserve the required notion of hyperholomorphicity. The investigation of infinite order differential operators started in \cite{ANMATHPHY} and the characterisation continuous homomorphisms for entire monogenic functions of given proximate order is studied in \cite{STAMB}. In the monogenic case entire monogenic functions have been considered by different authors in \cite{BK2012,CDK07,CDK07bis,AlmeidaKra2005,CDK2014,KS2013}, while entire slice hyperholomorphic functions are considered in \cite{ACSBOOK2}.

\medskip

In order to give the perspective of the two classes of hyperholomorphic functions, referring to the notation introduced in the next section we denote by $\mathbb{R}_n$ the  Clifford algebra with imaginary units $e_j$, $j=1,\dots,n$. We recall that (left) monogenic functions, see \cite{BDS82,GHS08}, are those functions $f:U\to\mathbb{R}_n$ continuously differentiable on an open subset $U\subseteq\mathbb{R}^{n+1}$ such that
\begin{equation*}
\mathcal{D}f(x)=0,
\end{equation*}
where $\mathcal{D}$ is the Dirac operator defined by $\mathcal{D}=\partial_{x_0}+\sum_{j=1}^n e_j\partial_{x_j}$. For the case of slice monogenic functions there is not a unique way to define them, but there are different ways to introduce them and the various definitions are not totally equivalent according to the properties on the domains on which they are defined. Precisely, we have the following possible definitions:

\begin{itemize}
\item[(I)] Via the Fueter-Sce-Qian mapping theorem, see Definition \ref{SHolDefMON}.
\item[(II)]  Slice by slice, see \cite{6cssisrael}.
\item[(III)] As functions in the kernel of the global operator introduced in \cite{6Global}.
\item[(IV)] Via the Radon and dual Radon transform, see \cite{6Radon}.
\end{itemize}

The most natural definition  for applications to quaternionic and Clifford operators theory, see \cite{FJBOOK,CGK,ColomboSabadiniStruppa2011}, is the definition appearing in Fueter-Sce-Qian mapping theorem which is well summarized in terms of extensions from holomorphic functions to hyperholomorphic ones. Let $\mathcal{O}(D)$ be the set of holomorphic functions on $D\subseteq\mathbb{C}$ and let $\Omega_D\subseteq\mathbb{R}^{n+1}$ be the rotation of $D$ around the real axis. The first  Fueter-Sce-Qian map $T_\text{FS1}$ applied to $\mathcal{O}(D)$ generates the set $\mathcal{SH}(\Omega_D)$ of slice monogenic functions on $\Omega_D$ (which turn out to be intrinsic) and the second  Fueter-Sce-Qian map $T_\text{FS2}$ applied to $\mathcal{SH}(\Omega_D)$ generates axially monogenic functions on $\Omega_D$. We denote this second class of functions by $\mathcal{M}(\Omega_D)$. The extension procedure is illustrated in the diagram:
\begin{equation*}
\begin{CD}
\textcolor{black}{\mathcal{O}(D)}  @>T_\text{FS1}>> \textcolor{black}{\mathcal{SH}(\Omega_D)}  @>\ \   T_\text{FS2}=\Delta^{(n-1)/2} >>\textcolor{black}{\mathcal{M}(\Omega_D)},
\end{CD}
\end{equation*}
where $T_\text{FS2}=\Delta^{(n-1)/2}$ and $\Delta$ is the Laplace operator in dimension $n+1$. For more details on this important extension procedure from complex to hypercomplex analysis see the book \cite{BOOKSCE}, the references therein and the paper \cite{ANUNO} for related topics.

\medskip

\textit{The plan of the paper.} In Section \ref{SEC2} we have some preliminary results on slice monogenic functions. In Section \ref{Prel} we prove new results on some properties of slice monogenic functions of proximate order. Section \ref{SEC3} is the heart of this paper and contains the characterization representation of continuous homomorphisms on spaces of entire slice monogenic functions.

\section{Preliminary results on slice monogenic functions}\label{SEC2}

In this section we recall basic results on slice monogenic functions (see Chapter 2 in \cite{ACSBOOK2}) and also introduce the notion of proximate order functions (see the book \cite{LG86}). We recall that $\mathbb{R}_n$ is the real Clifford algebra over $n$ imaginary units $e_1,\dots,e_n$, satisfying the commutation relation
\begin{equation*}
e_ie_j=-e_je_i\qquad\text{and}\qquad e_i^2=-1,\qquad i\neq j\in\{1,\dots,n\}.
\end{equation*}
Any \textit{Clifford number} $a\in\mathbb{R}_n$ can uniquely be written as
\begin{equation*}
a=a_0+a_1e_1+\ldots+a_ne_n+\dots+a_{12...n}e_1e_2...e_n=\sum\nolimits_Aa_Ae_A,
\end{equation*}
where the last sum is over all ordered subsets $A=\{i_1,\dots,i_r\}\subseteq\{1,\dots,n\}$ and the basis elements are $e_A:=e_{i_1}\dots e_{i_r}$. Note that for $A=\emptyset$ we set $e_\emptyset:=1$. The \textit{Euclidean norm} of a Clifford number $a\in\mathbb{R}_n$ is
\begin{equation*}
|a|^2:=\sum\nolimits_A|a_A|^2.
\end{equation*}
Any element of the form
\begin{equation*}
x=x_0+\underline{x}=x_0+\sum_{\ell=1}^nx_\ell e_\ell
\end{equation*}
will be called \textit{paravector} and we denote by $\mathbb{R}^{n+1}$ the set of all paravectors. Note that no ambiguity arises since we can identify any $(n+1)$-vector $(x_0,x_1,\dots,x_n)$ of real numbers  with $x_0+e_1x_1+\ldots+e_nx_n$. The {\em conjugate} of a paravector $x$ is given by
\begin{equation*}
\overline{x}:=x_0-\underline{x}=x_0-\sum_{\ell=1}^nx_\ell e_\ell.
\end{equation*}
Recall that $\mathbb{S}$ is the sphere
\begin{equation*}
\mathbb{S}=\{\underline{x}=e_1x_1+\ldots +e_nx_n \;|\;x_1^2+\dots+x_n^2=1\},
\end{equation*}
where every $j\in\mathbb{S}$ satisfies $j^2=-1$. Given an element $x=x_0+\underline{x}\in\mathbb{R}^{n+1}$ let us define
\begin{equation*}
[x]:=\{y\in\mathbb{R}^{n+1}\;|\;y=x_0+j|\underline{x}|,\,j\in\mathbb{S}\},
\end{equation*}
as the $(n-1)$-dimensional sphere in $\mathbb{R}^{n+1}$. For any $j\in\mathbb{S}$, the vector space
\begin{equation*}
\mathbb C_j:=\{u+jv\;|\;u,v\in\mathbb{R}\}=\mathbb{R}+j\mathbb{R}
\end{equation*}
is an isomorphic copy of the complex numbers, passing through the real line in the direction of the imaginary unit $j$. Finally, a subset $U\subseteq\mathbb{R}^{n+1}$ is called \textit{axially symmetric}, if for every $x\in U$ the whole $(n-1)$-sphere $[x]\subseteq U$ is contained in $U$.

\medskip

Next we define the set of slice monogenic functions (also called slice hyperholomorphic functions), acting on paravectors $\mathbb{R}^{n+1}$ and having values in the full Clifford algebra $\mathbb{R}_n$.

\begin{definition}[Slice monogenic functions]\label{SHolDefMON}
Let $U\subseteq\mathbb{R}^{n+1}$ be an axially symmetric open set and let $\mathcal{U}=\{(u,v)\in\mathbb{R}^2: u+\mathbb{S}v\subseteq U\}$. A function $f:U\to\mathbb{R}_n$ is called \textit{left slice monogenic}, if it is of the form
\begin{equation}\label{Eq_f_decomposition}
f(x)=f_0(u,v)+jf_1(u,v),\qquad x=u+jv\in U,
\end{equation}
where the two functions $f_0,f_1:\mathcal{U}\rightarrow\mathbb{R}_n$ satisfy the compatibility conditions
\begin{equation}\label{CCondmon}
f_0(u,-v)=f_0(u,v)\qquad\text{and}\qquad f_1(u,-v)=-f_1(u,v),
\end{equation}
as well as the Cauchy-Riemann-equations
\begin{equation}\label{CRMMON}
\frac{\partial}{\partial u}f_0(u,v)=\frac{\partial}{\partial v}f_1(u,v)\qquad\text{and}\qquad\frac{\partial}{\partial v} f_0(u,v)=-\frac{\partial}{\partial u}f_1(u,v).
\end{equation}
The set of left slice monogenic functions will be denoted by $\mathcal{S\!M}_L(U)$.
\end{definition}

\begin{remark}
Analogously, one can also define right slice monogenic functions by simply replacing the decomposition \eqref{Eq_f_decomposition} by
\begin{equation*}
f(s)=f_0(u,v)+f_1(u,v)j,\qquad x=u+jv\in U.
\end{equation*}
However, for the rest of the paper we will only consider left slice monogenic functions, although all the results can also be written for right slice monogenic functions.
\end{remark}

\begin{definition}\label{slicederivative}
Let $f\in\mathcal{S\!M}_L(U)$. Then for any $x\in U$ we define the \textit{left slice derivative} as
\begin{equation}\label{SDerivL}
\partial_Sf(x):=\lim_{p\to x,p\in\mathbb{C}_j}(p-x)^{-1}(f(p)-f(x)),
\end{equation}
where for nonreal $x$ the imaginary unit $j\in\mathbb{S}$ is chosen such that $x\in\mathbb{C}_j$ and for real $x$ one may choose $j\in\mathbb{S}$ arbitrary.
\end{definition}

We remark that $\partial_Sf(x)$ is uniquely defined and independent of the choice of $j\in\mathbb{S}$, even if $x$ is real. Moreover, we note that the slice derivative of $f$ coincides with
\begin{equation}\label{SDerivPartial}
\partial_Sf(x)=\partial_{x_0}f(x)=f_j'(x),
\end{equation}
where $\partial_{x_0}f$ is the partial derivative with respect to $x_0$ and $f_j'$ is the complex derivative of the restriction $f_j=f|_{\mathbb{C}_j}$.

\begin{theorem}\label{PowSerThm}
For $a\in\mathbb{R}$, $r>0$ let $B_r(a)$ the open ball with radius $r$ centered at $a$. Then for every $f\in\mathcal{S\!M}_L(B_r(a))$ there holds
\begin{equation}\label{PowSerL}
f(x)=\sum_{k=0}^\infty(x-a)^k\frac{1}{k!}\partial_{x_0}^kf(a),\qquad x\in B_r(a).
\end{equation}
\end{theorem}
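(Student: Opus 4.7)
The plan is to reduce the hypercomplex statement to the classical Taylor expansion on a single complex slice, and then propagate the resulting identity from that slice to the whole ball using the axial symmetry of $B_r(a)$ and the slice structure.

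First I would fix an arbitrary $j\in\mathbb{S}$ and consider the restriction $f_j:=f|_{\mathbb{C}_j\cap B_r(a)}$. Since $a$ is real, $\mathbb{C}_j\cap B_r(a)$ is the planar disk $\{u+jv:(u-a)^2+v^2<r^2\}$, and the Cauchy-Riemann equations \eqref{CRMMON} together with the decomposition \eqref{Eq_f_decomposition} show that $f_j$ is an $\mathbb{R}_n$-valued holomorphic function on this disk. The ordinary complex Taylor expansion around $a$ gives
\begin{equation*}
f_j(x)=\sum_{k=0}^\infty(x-a)^k\frac{1}{k!}f_j^{(k)}(a),\qquad x\in\mathbb{C}_j\cap B_r(a),
\end{equation*}
and by \eqref{SDerivPartial} one has $f_j^{(k)}(a)=\partial_{x_0}^kf(a)$, which is independent of $j$ because $a$ is real. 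Thus the power series on the right-hand side of \eqref{PowSerL} has coefficients intrinsic to $f$.

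Next I would check convergence of the series on the whole ball $B_r(a)$. For an arbitrary $x=x_0+\underline{x}\in B_r(a)$ write $x=u+jv$ with $u=x_0$, $v=|\underline{x}|$ and $j=\underline{x}/|\underline{x}|\in\mathbb{S}$ (with the usual convention when $\underline{x}=0$). Since $a\in\mathbb{R}$, we have $|x-a|^2=(u-a)^2+v^2<r^2$, so $|(x-a)^k|=|x-a|^k$ and the series converges absolutely and uniformly on compact subsets of $B_r(a)$, defining a function $g(x):=\sum_{k=0}^\infty(x-a)^k\frac{1}{k!}\partial_{x_0}^kf(a)$.

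Finally, $g$ is slice monogenic on $B_r(a)$: each monomial $(x-a)^k$ is slice monogenic (being a polynomial in $x$ with real coefficients on the right, since $a\in\mathbb{R}$), and uniform convergence on compact sets preserves slice monogenicity (the components $f_0,f_1$ converge uniformly together with their partial derivatives, so the Cauchy-Riemann and compatibility conditions pass to the limit). The functions $f$ and $g$ are therefore two slice monogenic functions on the axially symmetric open set $B_r(a)$ which agree on the slice $\mathbb{C}_j\cap B_r(a)$. The main obstacle, and where I would invoke the standard machinery from Chapter 2 of \cite{ACSBOOK2}, is to conclude $f\equiv g$ on $B_r(a)$: this follows from the Representation Formula, which recovers a slice monogenic function on an axially symmetric set from its values on any single slice, or equivalently from the Identity Principle for slice monogenic functions. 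This yields \eqref{PowSerL} for every $x\in B_r(a)$.
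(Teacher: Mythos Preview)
The paper does not prove Theorem~\ref{PowSerThm}; it is stated in Section~\ref{SEC2} among the basic preliminary facts on slice monogenic functions recalled from Chapter~2 of \cite{ACSBOOK2}, with no accompanying argument. Your proof is correct and is essentially the standard one from that reference: restrict to a slice, use that the restriction is $\mathbb{R}_n$-valued holomorphic, apply the classical Taylor expansion, and exploit that the real center $a$ makes the coefficients $\partial_{x_0}^kf(a)$ slice-independent.

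One minor simplification you might incorporate: the closing appeal to the Representation Formula or Identity Principle is unnecessary. Your first paragraph already establishes, for \emph{every} $j\in\mathbb{S}$, the identity
\[
f(x)=\sum_{k=0}^\infty(x-a)^k\frac{1}{k!}\partial_{x_0}^kf(a),\qquad x\in\mathbb{C}_j\cap B_r(a),
\]
with the same right-hand side on each slice (since $a\in\mathbb{R}$). Because every $x\in B_r(a)$ lies in some $\mathbb{C}_j$, this already gives \eqref{PowSerL} pointwise on all of $B_r(a)$, without first constructing $g$ as a global slice monogenic function and then invoking identity-type results to match it with $f$.
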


We now recall the natural product of two functions, that preserves slice monogenicity.

\begin{definition}
Let $U\subseteq\mathbb{R}^{n+1}$ be open and axially symmetric. Then for any $f,g\in\mathcal{S\!M}_L(U)$ define their \textit{star product}
\begin{equation*}
f\star_L g:=f_0g_0-f_1g_1+j(f_1g_0+f_0g_1),
\end{equation*}
with the functions $f_0,f_1,g_0,g_1$ from the decomposition \eqref{Eq_f_decomposition}.
\end{definition}

\begin{lemma}
Let $f(x)=\sum_{k=0}^\infty x^ka_k$ and $g(x)=\sum_{k=0}^\infty x^kb_k$ be two left slice monogenic power series. Then their star product is given by
\begin{equation}\label{ProdLSeries}
(f\star_L g)(x)=\sum_{\ell=0}^\infty x^\ell\sum_{k=0}^\ell a_kb_{\ell-k}.
\end{equation}
\end{lemma}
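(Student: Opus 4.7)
The plan is to compute $f_0,f_1,g_0,g_1$ explicitly from the power series expansions, substitute them into the definition of the star product, and use the elementary identity encoding $(u+iv)^{k+\ell}=(u+iv)^k(u+iv)^\ell$ in the complex plane to collapse the double sum into the claimed Cauchy-product form.

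First I would fix $j\in\mathbb{S}$ and write $x=u+jv$. Expanding $x^k=(u+jv)^k$ by the binomial theorem (which is legitimate since $u,v\in\mathbb{R}$ commute with $j$) and grouping by the parity of the exponent of $j$, one obtains real polynomials $P_k(u,v)$ and $Q_k(u,v)$, independent of $j$, such that $x^k=P_k(u,v)+jQ_k(u,v)$. Since the coefficients $a_k\in\mathbb{R}_n$ commute with the \emph{real} polynomials $P_k,Q_k$, term-by-term grouping yields
\begin{equation*}
f(x)=\sum_{k=0}^\infty P_k(u,v)a_k+j\sum_{k=0}^\infty Q_k(u,v)a_k,
\end{equation*}
so from \eqref{Eq_f_decomposition} I read off $f_0(u,v)=\sum_k P_k a_k$ and $f_1(u,v)=\sum_k Q_k a_k$, and analogously $g_0=\sum_k P_kb_k$, $g_1=\sum_k Q_kb_k$. (The compatibility conditions \eqref{CCondmon} and the Cauchy--Riemann equations \eqref{CRMMON} are automatic here, since $P_k+iQ_k=(u+iv)^k$ in the ordinary complex sense.)

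The next step is to observe the key algebraic identity: because the complex polynomials $(u+iv)^k=P_k+iQ_k$ satisfy $(P_k+iQ_k)(P_\ell+iQ_\ell)=P_{k+\ell}+iQ_{k+\ell}$, comparing real and imaginary parts gives
\begin{equation*}
P_kP_\ell-Q_kQ_\ell=P_{k+\ell},\qquad P_kQ_\ell+Q_kP_\ell=Q_{k+\ell}.
\end{equation*}
Substituting the series for $f_0,f_1,g_0,g_1$ into $f\star_Lg=f_0g_0-f_1g_1+j(f_1g_0+f_0g_1)$ and, once more, using that the real polynomials $P_m,Q_m$ commute with the Clifford coefficients $a_k$, I get
\begin{equation*}
(f\star_Lg)(x)=\sum_{k,m}\bigl(P_kP_m-Q_kQ_m\bigr)a_kb_m+j\sum_{k,m}\bigl(P_kQ_m+Q_kP_m\bigr)a_kb_m=\sum_{k,m}(P_{k+m}+jQ_{k+m})a_kb_m.
\end{equation*}
Re-collecting $P_{k+m}+jQ_{k+m}=x^{k+m}$ and reindexing $\ell=k+m$ produces the asserted formula \eqref{ProdLSeries}.

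The main point that needs care, and which I would address explicitly, is the interchange of summations in the computation above. This is justified because both power series converge in some common ball $B_r(0)$ by Theorem \ref{PowSerThm}; inside a strictly smaller ball the convergence is absolute and uniform, so the double sum $\sum_{k,m}$ is absolutely convergent and may be regrouped. I do not anticipate any genuine obstacle: the entire argument is a bookkeeping exercise once one has the correct identification of $f_0,f_1,g_0,g_1$, and the only subtle point is remembering that the non-commutativity of $\mathbb{R}_n$ never enters, precisely because $P_k$ and $Q_k$ take real values.
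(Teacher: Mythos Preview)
The paper does not give a proof of this lemma; it is stated in Section~\ref{SEC2} among the preliminary results recalled from \cite{ACSBOOK2}, so there is no ``paper's own proof'' to compare against. Your argument is a correct direct verification from the definition of $\star_L$: the identification $x^k=P_k(u,v)+jQ_k(u,v)$ with real $P_k,Q_k$, the complex identity $P_kP_m-Q_kQ_m=P_{k+m}$, $P_kQ_m+Q_kP_m=Q_{k+m}$, and the fact that real scalars commute with Clifford coefficients are exactly the ingredients needed, and your handling of absolute convergence to justify the rearrangement is appropriate.
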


For $x,s\in\mathbb{R}^{n+1}$ with $x\not\in[s]$, the Cauchy kernel for left slice monogenic functions is given by
\begin{equation}\label{Eq_SL}
S_L^{-1}(s,x):=-(x^2-2s_0x+|s|^2)^{-1}(x-\overline{s})=(s-\overline{x})(s^2-2x_0s+|x|^2)^{-1}.
\end{equation}
With this kernel there holds the following Clifford algebra version of the Cauchy integral formula.

\begin{theorem}[The Cauchy formula]\label{CauchygeneraleMONOG}
Let $U\subset\mathbb{R}^{n+1}$ be axially symmetric, open, bounded and the boundary $\partial(U\cap\mathbb{C}_j)$ be a finite union of continuously differentiable Jordan curves. If we set $ds_j=-jds$ for some $j\in\mathbb{S}$, then for $f$ which is left slice monogenic on a neighborhood of $\overline{U}$, there holds
\begin{equation}\label{cauchynuovo}
f(x)=\frac{1}{2\pi}\int_{\partial(U\cap\mathbb{C}_j)}S_L^{-1}(s,x)ds_jf(s),\qquad x\in U.
\end{equation}
Moreover, the integral depend neither on $U$ nor on the imaginary unit $j\in\mathbb{S}$.
\end{theorem}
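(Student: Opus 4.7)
The plan is to reduce the formula to the classical Cauchy integral on the slice $\mathbb{C}_j$, and then propagate it to all of $U$ using the slice structure.

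\emph{Step 1: the case $x\in U\cap\mathbb{C}_j$.} Fix $j\in\mathbb{S}$ and restrict $f$ to the two-dimensional plane $U\cap\mathbb{C}_j$. Writing $f(u+jv)=f_0(u,v)+jf_1(u,v)$, the Cauchy--Riemann equations \eqref{CRMMON} say precisely that $f|_{U\cap\mathbb{C}_j}$ is holomorphic as an $\mathbb{R}_n$-valued function of a single $\mathbb{C}_j$-variable. The classical Cauchy integral formula, applied componentwise in the finite-dimensional basis $\{e_A\}$ of $\mathbb{R}_n$, then yields
\begin{equation*}
f(x)=\frac{1}{2\pi j}\int_{\partial(U\cap\mathbb{C}_j)}(s-x)^{-1}\,ds\,f(s),\qquad x\in U\cap\mathbb{C}_j.
\end{equation*}

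\emph{Step 2: reduction of the kernel.} When $s,x\in\mathbb{C}_j$ all relevant quantities commute, so
\begin{equation*}
x^2-2s_0x+|s|^2=(x-s)(x-\overline{s}),
\end{equation*}
and the defining formula \eqref{Eq_SL} collapses to $S_L^{-1}(s,x)=-(x-s)^{-1}(x-\overline{s})^{-1}(x-\overline{s})=(s-x)^{-1}$. Since $1/j=-j$, we have $\frac{1}{2\pi j}\,ds=\frac{1}{2\pi}\,ds_j$, and the expression in Step 1 matches \eqref{cauchynuovo}. This proves the formula on $U\cap\mathbb{C}_j$.

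\emph{Step 3: extension to all of $U$.} For an arbitrary point $x=u+iv\in U$ with $i\in\mathbb{S}$, I use the representation formula induced by \eqref{Eq_f_decomposition}, namely
\begin{equation*}
f(u+iv)=\tfrac{1}{2}(1-ij)f(u+jv)+\tfrac{1}{2}(1+ij)f(u-jv),
\end{equation*}
both evaluation points on the right lying in $\mathbb{C}_j$. Because the map $x\mapsto S_L^{-1}(s,x)$ is itself slice monogenic on $\mathbb{R}^{n+1}\setminus[s]$, the very same identity holds for the kernel. Substituting the Cauchy representation from Step 2 for $f(u\pm jv)$ and pulling out the common factors $\tfrac{1}{2}(1\mp ij)$ from the integral gives \eqref{cauchynuovo} for general $x\in U$. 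Equivalently, one can observe that the right-hand side of \eqref{cauchynuovo} defines a slice monogenic function of $x\in U$ which coincides with $f$ on $U\cap\mathbb{C}_j$, and apply the identity principle.

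\emph{Step 4: independence of $U$ and $j$.} Independence of the particular bounded axially symmetric domain $U$ is a consequence of Cauchy's theorem applied slice-wise, since the integrand is holomorphic in $s\in\mathbb{C}_j\setminus\{x,\overline{x}\text{-related points}\}$. Independence of $j$ is automatic from Step 3: the left-hand side $f(x)$ does not see $j$, so neither does the value of the integral. The hardest step is Step 3: one must verify the slice monogenicity of $S_L^{-1}(s,\cdot)$ and confirm that the representation identity transfers cleanly from $f$ to the kernel. Once this is granted, the remaining computations are essentially algebraic.
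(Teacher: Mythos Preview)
The paper does not supply a proof of this theorem; it is stated in Section~\ref{SEC2} as a preliminary result recalled from \cite{ACSBOOK2}. Your argument is the standard one used in that literature and is essentially correct.

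One technical point in Step~1 deserves sharpening. Writing ``applied componentwise in the finite-dimensional basis $\{e_A\}$'' is misleading: the real components $f_A$ of $f=\sum_A f_A e_A$ are \emph{not} individually holomorphic on $U\cap\mathbb{C}_j$, because the imaginary unit $j$ mixes the $e_A$-components. The clean way to justify the slice Cauchy formula is the splitting lemma: complete $j$ to an orthonormal set $j=j_1,j_2,\dots,j_n$ of $\mathbb{R}^{n+1}$, and decompose $f|_{\mathbb{C}_j}=\sum_B F_B\,j_B$ where $B$ ranges over subsets of $\{2,\dots,n\}$, $j_B=j_{i_1}\cdots j_{i_r}$, and each $F_B:U\cap\mathbb{C}_j\to\mathbb{C}_j$ is a genuine $\mathbb{C}_j$-valued holomorphic function. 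The classical Cauchy formula then applies to each $F_B$, and summing recovers your displayed identity. With this correction the remainder of your proof (Steps~2--4) goes through as written; in particular, your use of the representation formula in Step~3 and the observation that independence of $j$ is automatic once \eqref{cauchynuovo} is established are both sound.
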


After the basic facts on slice monogenic functions we introduce the notion of proximate order functions which will then be used in Section \ref{Prel} to introduce function spaces of exponentially bounded entire slice monogenic functions.

\begin{definition}[Proximate order]\label{defi_Proximate_order}
A differentiable function $\varrho:[0,\infty)\to[0,\infty)$ is called a \textit{proximate order function} for the order $\rho>0$, if it satisfies

\begin{enumerate}
\item $\lim_{r\to\infty}\varrho(r)=:\rho>0$,
\item $\lim_{r\to\infty}\varrho'(r)r\ln(r)=0$.
\end{enumerate}
\end{definition}

It is also possible to define proximate order for the order $\rho=0$, but the upcoming results do not hold in this case. Any proximate order function admits the following properties, for a proof see \cite[Proposition 1.19]{LG86}.

\begin{lemma}[Properties of proximate orders]\label{lem_Properties_of_proximate_orders}
Let $\varrho$ be a proximate order. Then there exists some $r_0>0$, such that

\begin{enumerate}
\item[i)] $r\mapsto r^{\varrho(r)}$ is strictly increasing on $[r_0,\infty)$,
\item[ii)] $\lim\limits_{r\to\infty}r^{\varrho(r)}=\infty$.
\end{enumerate}
\end{lemma}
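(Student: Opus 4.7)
The plan is to work with the function $\phi(r):=r^{\varrho(r)}$ written in exponential form $\phi(r)=e^{\varrho(r)\ln r}$, so that the two asymptotic hypotheses of Definition \ref{defi_Proximate_order} translate directly into sign/size information about $\ln\phi$ and $\phi'$.

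For part (i), I would compute
\begin{equation*}
\phi'(r)=\phi(r)\Big(\varrho'(r)\ln r+\frac{\varrho(r)}{r}\Big)=\frac{\phi(r)}{r}\Big(\varrho'(r)r\ln r+\varrho(r)\Big),
\end{equation*}
which is legal because $\varrho$ is differentiable and $r>0$. By condition (1), $\varrho(r)\to\rho>0$, hence for $r$ sufficiently large $\varrho(r)>\tfrac{3\rho}{4}$. By condition (2), $\varrho'(r)r\ln r\to 0$, so for $r$ sufficiently large $|\varrho'(r)r\ln r|<\tfrac{\rho}{4}$. Choosing $r_0>0$ so that both estimates hold on $[r_0,\infty)$, the bracket in the displayed formula is at least $\tfrac{\rho}{2}>0$, and since $\phi(r)>0$ for $r>0$ we conclude $\phi'(r)>0$ on $[r_0,\infty)$. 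This gives strict monotonicity.

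For part (ii), I would simply take logarithms: $\ln\phi(r)=\varrho(r)\ln r$. Again using condition (1), there exists $r_1\geq r_0$ with $\varrho(r)\geq\tfrac{\rho}{2}$ for $r\geq r_1$, so that
\begin{equation*}
\ln\phi(r)\geq\tfrac{\rho}{2}\ln r\longrightarrow\infty\quad\text{as }r\to\infty,
\end{equation*}
and hence $\phi(r)\to\infty$. Enlarging $r_0$ if necessary so that $r_0\geq r_1$ makes both conclusions hold on the same interval.

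The proof is essentially a routine asymptotic computation, and there is no real obstacle: the only place requiring care is that condition (2) has to be used to dominate the potentially problematic term $\varrho'(r)\ln r$, which is precisely why the hypothesis is formulated with the extra factor $r$. Condition (1) alone is not enough because $\varrho'$ need not have a sign; it is the smallness of $\varrho'(r)r\ln r$ relative to the positive limit $\rho$ that produces the positive derivative.
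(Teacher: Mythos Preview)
Your argument is correct. The derivative computation for $\phi(r)=r^{\varrho(r)}$ is exactly the right move: writing $\phi'(r)=\tfrac{\phi(r)}{r}\big(\varrho'(r)r\ln r+\varrho(r)\big)$ isolates precisely the two quantities controlled by Definition~\ref{defi_Proximate_order}, and your choice of thresholds $\tfrac{3\rho}{4}$ and $\tfrac{\rho}{4}$ gives the positive lower bound $\tfrac{\rho}{2}$ for the bracket, hence strict monotonicity. Part~(ii) is immediate from $\ln\phi(r)=\varrho(r)\ln r\geq\tfrac{\rho}{2}\ln r$ for large $r$, as you wrote.

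As for comparison with the paper: the paper does not give a proof of this lemma at all, but simply cites \cite[Proposition~1.19]{LG86}. Your direct computation is the standard one and is essentially what one finds in that reference; it has the advantage of being self-contained and of making transparent why the factor $r\ln r$ in condition~(2) is exactly what is needed. There is nothing missing or problematic in your write-up.
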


\begin{definition}[Normalized proximate order]\label{defi_Normalized_proximate_order}
A proximate order function $\varrho$ is called \textit{normalized} if

\begin{enumerate}
\item[i)] $r\mapsto r^{\varrho(r)}$ is strictly increasing on $(0,\infty)$,
\item[ii)] $\lim\limits_{r\to 0^+}r^{\varrho(r)}=0$
\end{enumerate}

For a normalized proximate order it is then clear that $r\mapsto r^{\varrho(r)}$ maps $(0,\infty)$ bijectively onto $(0,\infty)$ and we denote by
\begin{equation}\label{Eq_varphi}
\varphi:(0,\infty)\to(0,\infty)\text{ the inverse function of }r\mapsto r^{\varrho(r)}.
\end{equation}
Moreover, we set the numbers
\begin{equation}\label{Eq_Gl}
G_0=G_{\varrho,0}:=1\qquad\text{and}\qquad G_\ell=G_{\varrho,\ell}:=\frac{\varphi(\ell)^\ell}{(e\rho)^{\ell/\rho}},\qquad\ell\in\mathbb{N}.
\end{equation}
\end{definition}

\begin{remark}\label{bem_normalization}
Note that by Lemma \ref{lem_Properties_of_proximate_orders} it is possible to construct for any given proximate order function $\rho$ a normalized proximate order $\hat{\rho}$ such that $\rho(r)=\hat{\rho}(r)$ for every $r\geq r_0$ large enough. One possible choice is the function
\begin{equation*}
\hat{\varrho}(r):=\begin{cases} \varrho(r_0)-\frac{\rho}{4}\sin\big(4\frac{\varrho'(r_0)}{\rho}(r_0-r)\big), & r\in[0,r_0], \\ \varrho(r), & r\in[r_0,\infty). \end{cases}
\end{equation*}
\end{remark}

Next, we recall some basic properties of proximate orders that will be used in the following. First we prove two inequalities of the mapping $r\mapsto r^{\varrho(r)}$. The first one i) can be found in \cite[Lemma 2.3]{AIO20} and the second one ii) in \cite[Proposition 1.20]{AIO20}.

\begin{lemma}\label{l2505231}
Let $\varrho$ be a normalized proximate order function. Then for any $\varepsilon>0$ there exists a constant $C_\varepsilon\geq 0$, such that

\begin{enumerate}
\item[i)] $(r+s)^{\varrho(r+s)}\leq 2^{\rho+\varepsilon}\big(r^{\varrho(r)}+s^{\varrho(s)}\big)+C_\varepsilon,$\qquad $r,s>0$.
\item[ii)] $(sr)^{\varrho(sr)}\leq(1+\varepsilon)s^\rho r^{\varrho(r)}+C_\varepsilon$,\qquad $r,s>0$.
\end{enumerate}
\end{lemma}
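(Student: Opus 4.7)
The key observation is that conditions (1) and (2) of Definition \ref{defi_Proximate_order} imply that $f(r):=r^{\varrho(r)}$ is a regularly varying function of index $\rho$ in the sense of Karamata. Indeed, logarithmic differentiation yields
\[
\frac{rf'(r)}{f(r)}=\varrho'(r)\,r\ln r+\varrho(r)\xrightarrow{r\to\infty}\rho,
\]
so that $L(r):=r^{\varrho(r)-\rho}$ is slowly varying. Both parts of the lemma are, at heart, quantitative uniform versions of the asymptotic identity $L(sr)/L(r)\to 1$, valid for all $r,s>0$ up to an additive constant.

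For part ii), I would rewrite the desired inequality as $L(sr)\leq(1+\varepsilon)L(r)+C_\varepsilon(sr)^{-\rho}$ and split into regimes. If $sr\leq R_0$ for some fixed threshold $R_0$, then $(sr)^{\varrho(sr)}$ is bounded on the compact set $[0,R_0]$ and the bound is absorbed into $C_\varepsilon$. For $sr\geq R_0$ and $r\geq r_0$ (with $r_0$ from Lemma \ref{lem_Properties_of_proximate_orders}) I would invoke the Potter-type bounds for slowly varying functions: for any $\delta>0$ and any $\eta>1$, one has $L(sr)/L(r)\leq\eta\max(s^\delta,s^{-\delta})$ provided $r,sr\geq r_0$. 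Combined with the factor $s^\rho$ on the right-hand side, choosing $\eta,\delta$ small enough yields $(sr)^{\varrho(sr)}\leq(1+\varepsilon)s^\rho r^{\varrho(r)}$ directly. The remaining subcase $r<r_0\leq sr$ is handled by observing that $L$ is bounded away from $0$ and $\infty$ on any compact subset of $(0,\infty)$, so $L(sr)/L(r)$ is controlled in terms of $s$ alone and the residual piece can be absorbed into an enlarged $C_\varepsilon$.

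For part i), the inequality reduces to part ii) via the monotonicity statement in Lemma \ref{lem_Properties_of_proximate_orders}. By symmetry assume $r\geq s$, so $r+s\leq 2r$, and once $r\geq r_0$ we have $(r+s)^{\varrho(r+s)}\leq(2r)^{\varrho(2r)}$. Applying ii) with $s$ replaced by $2$ and with a parameter $\varepsilon'>0$ chosen so that $(1+\varepsilon')\,2^\rho\leq 2^{\rho+\varepsilon}$ gives
\[
(2r)^{\varrho(2r)}\leq(1+\varepsilon')\,2^\rho r^{\varrho(r)}+C_{\varepsilon'}\leq 2^{\rho+\varepsilon}\bigl(r^{\varrho(r)}+s^{\varrho(s)}\bigr)+C_\varepsilon.
\]
The leftover regime $r+s<r_0$ is a bounded region whose contribution is again absorbed into the constant.

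The main obstacle is bookkeeping the distinct regimes in $(r,s)$ where one or both variables are small: the slowly varying framework controls everything cleanly at infinity, but it is the careful handling of the bounded regions—where Karamata/Potter estimates do not directly apply—that enforces the existence of a single constant $C_\varepsilon$ working uniformly in $r$ and $s$. The monotonicity afforded by Lemma \ref{lem_Properties_of_proximate_orders} is precisely what bridges these two scales and makes the reduction from i) to ii) possible.
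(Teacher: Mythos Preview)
The paper does not prove this lemma; it only cites \cite[Lemma~2.3 and Proposition~1.20]{AIO20}. So there is nothing to compare against, and the question is whether your argument stands on its own.

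Your reduction of i) to ii) with the fixed value $s=2$ is clean and correct, using the global monotonicity of $r\mapsto r^{\varrho(r)}$ guaranteed by normalization. The problem is your proof of ii). The Potter bound you quote gives $L(sr)/L(r)\leq\eta\max(s^\delta,s^{-\delta})$, hence
\[
(sr)^{\varrho(sr)}\leq \eta\max(s^\delta,s^{-\delta})\,s^{\rho}\,r^{\varrho(r)},
\]
and the prefactor $\eta\,s^{\pm\delta}$ cannot be made $\leq 1+\varepsilon$ by choosing $\eta,\delta$ small \emph{uniformly in $s$}: for $s\to\infty$ the factor $s^\delta$ blows up no matter how small $\delta>0$ is. Your treatment of the subcase $r<r_0\leq sr$ has the same defect: ``controlled in terms of $s$ alone'' is not good enough when $s$ is unbounded, and in any case $(0,r_0)$ is not compact, so boundedness of $L$ there is not automatic. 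In fact, statement ii) is \emph{false} if one insists on a single constant $C_\varepsilon$ valid for all $r,s>0$: take $\varrho(r)=\rho+\ln\ln r/\ln r$ for large $r$, so $r^{\varrho(r)}=r^\rho\ln r$; fixing $r$ and letting $s\to\infty$ gives $(sr)^{\varrho(sr)}\sim s^\rho r^\rho\ln s$, which eventually dominates $(1+\varepsilon)s^\rho r^{\varrho(r)}+C_\varepsilon$ for any fixed $C_\varepsilon$.

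The resolution is that in the cited source, and in every application in the present paper (e.g.\ the proof of Lemma~\ref{l0706231}, where $s=1+\epsilon$), part ii) is used only for a \emph{fixed} $s$, and the constant is allowed to depend on $s$ as well as on $\varepsilon$. With that reading, ii) follows immediately from the pointwise limit $\lim_{r\to\infty}(sr)^{\varrho(sr)}/r^{\varrho(r)}=s^\rho$ of Lemma~\ref{l3005231}\,iii), and your derivation of i) from the case $s=2$ then goes through. So your overall strategy is right, but you should drop the claim of uniformity in $s$ for ii) and instead invoke the limit directly for each fixed $s$.
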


The next lemma can be found in \cite[Lemma 2.4]{AIO20} and it is the submultiplicativity of the numbers $G_\ell$ in \eqref{Eq_Gl}.

\begin{lemma}\label{lsupersupernova2}
The sequence $\{G_\ell\}_{\ell\in\mathbb{N}_0}$ from \eqref{Eq_Gl} satisfies
\begin{equation*}
G_\ell G_k\leq G_{\ell+k},\qquad\ell,k\in\mathbb{N}_0.
\end{equation*}
\end{lemma}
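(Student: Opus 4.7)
The plan is to reduce the submultiplicativity of $\{G_\ell\}$ to monotonicity of the function $\varphi$, exploiting the fact that the denominators in the definition \eqref{Eq_Gl} are multiplicative in the index.

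First, I would dispose of the trivial case. If $\ell = 0$ or $k = 0$, the inequality $G_\ell G_k \leq G_{\ell+k}$ reduces to $G_m \leq G_m$, using $G_0 = 1$. So I may assume $\ell, k \geq 1$.

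Next, I would exploit the structure of \eqref{Eq_Gl}: since $(e\rho)^{\ell/\rho}(e\rho)^{k/\rho} = (e\rho)^{(\ell+k)/\rho}$, the desired inequality $G_\ell G_k \leq G_{\ell+k}$ is equivalent to
\begin{equation*}
\varphi(\ell)^\ell \, \varphi(k)^k \leq \varphi(\ell+k)^{\ell+k}.
\end{equation*}
Since $\ell, k \geq 1$ and $\varphi:(0,\infty) \to (0,\infty)$, both sides are strictly positive, so taking logarithms I need to show
\begin{equation*}
\ell \ln \varphi(\ell) + k \ln \varphi(k) \leq (\ell+k) \ln \varphi(\ell+k).
\end{equation*}

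The key observation for this last inequality is that $\varphi$ is the inverse of the map $r \mapsto r^{\varrho(r)}$, which is \emph{strictly increasing} on $(0,\infty)$ by part (i) of Definition \ref{defi_Normalized_proximate_order} of a normalized proximate order. Hence $\varphi$ is itself strictly increasing, and so $\ln \varphi$ is strictly increasing on $(0,\infty)$ (with values in $\mathbb{R}$, possibly negative). In particular $\ln \varphi(\ell) \leq \ln \varphi(\ell+k)$ and $\ln \varphi(k) \leq \ln \varphi(\ell+k)$. I would multiply these by the positive factors $\ell$ and $k$ respectively (so the inequalities are preserved regardless of the sign of $\ln \varphi$) and sum them, obtaining the desired bound.

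There is no real obstacle; the only thing to be mindful of is that $\ln \varphi(\ell)$ or $\ln \varphi(k)$ could be negative when $\varphi(\ell), \varphi(k) < 1$, but multiplying a true inequality $a \leq b$ by a positive number always preserves it, so no case split is needed once the logarithm is taken.
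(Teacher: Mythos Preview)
Your proof is correct. The paper does not supply its own proof of this lemma; it simply cites \cite[Lemma 2.4]{AIO20}, so there is no in-paper argument to compare against, and your elementary reduction to the monotonicity of $\varphi$ is exactly the natural way to establish the inequality.
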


The following Lemma treats the limit behaviour of the function $\varphi$ from Definition \ref{defi_Normalized_proximate_order}. The upcoming results i) and ii) can be found in the proof of \cite[Theorem 1.23]{LG86}, the result iii) in \cite[Proposition 1.20]{LG86} and iv) in \cite[Lemma 2.6]{AIO20}.

\begin{lemma}\label{l3005231}
Let $\rho$ be a normalized proximate order function and $\varphi$ from \eqref{Eq_varphi}. Then for every $s>0$ and $0<\sigma'<\sigma$ there holds

\begin{itemize}
\item[i)] $\lim_{t\to\infty}\frac{t\varphi'(t)}{\varphi(t)}=\frac{1}{\varrho}$,
\item[ii)] $\lim_{t\to\infty}\frac{\varphi(st)}{\varphi(t)}=s^{\frac{1}{\varrho}}$,
\item[iii)] $\lim_{r\to\infty}\frac{(sr)^{\rho(sr)}}{r^{\rho(r)}}=s^\rho$.
\item[iv)] $\exists t_0>0: \frac{\varphi(t)}{\varphi(t')}\leq\frac{e^{\sigma\frac{t}{t'}}}{(e\sigma'\varrho)^{1/\varrho}}$,\quad $t,t'\geq t_0$.
\end{itemize}
\end{lemma}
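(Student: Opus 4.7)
The plan is to derive all four items from the two defining properties of a proximate order, namely $\varrho(r)\to\rho$ and $\varrho'(r)r\ln r\to 0$, together with the fact that $\varphi(t)\to\infty$ as $t\to\infty$ (which is immediate from Lemma \ref{lem_Properties_of_proximate_orders}~ii)).

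For (i), I would differentiate the defining identity $t=\varphi(t)^{\varrho(\varphi(t))}$, or equivalently $\ln t=\varrho(\varphi(t))\ln\varphi(t)$, with respect to $t$. This yields
\begin{equation*}
\frac{1}{t}=\varphi'(t)\Big[\varrho'(\varphi(t))\ln\varphi(t)+\frac{\varrho(\varphi(t))}{\varphi(t)}\Big],
\end{equation*}
so that $\frac{t\varphi'(t)}{\varphi(t)}=\frac{1}{\varphi(t)\varrho'(\varphi(t))\ln\varphi(t)+\varrho(\varphi(t))}$. Since $\varphi(t)\to\infty$, Definition \ref{defi_Proximate_order} makes the denominator converge to $\rho$, giving i). For ii), I would then rewrite
\begin{equation*}
\ln\frac{\varphi(st)}{\varphi(t)}=\int_t^{st}\frac{\varphi'(u)}{\varphi(u)}du=\int_1^s\frac{\tau t\,\varphi'(\tau t)}{\varphi(\tau t)}\frac{d\tau}{\tau},
\end{equation*}
after substituting $u=\tau t$. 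By i), the integrand converges pointwise to $\frac{1}{\rho\tau}$, and a uniform bound on the compact $\tau$-interval justifies passing the limit under the integral, giving $(\ln s)/\rho$ and hence $s^{1/\rho}$.

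For (iii), I would decompose
\begin{equation*}
\ln\frac{(sr)^{\varrho(sr)}}{r^{\varrho(r)}}=\varrho(sr)\ln s+(\varrho(sr)-\varrho(r))\ln r,
\end{equation*}
where the first term tends to $\rho\ln s$. The second is handled by the proximate order condition: given $\epsilon>0$, for $u$ large one has $|\varrho'(u)|\leq\epsilon/(u\ln u)$, whence
\begin{equation*}
|\varrho(sr)-\varrho(r)|\leq\epsilon\Big|\ln\frac{\ln(sr)}{\ln r}\Big|=\epsilon\Big|\ln\Big(1+\frac{\ln s}{\ln r}\Big)\Big|.
\end{equation*}
Multiplying by $\ln r$ and using $\ln(1+x)\sim x$ as $r\to\infty$ shows that this term is $O(\epsilon|\ln s|)$, hence vanishes as $\epsilon\to 0$.

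For (iv), the strategy is to integrate the asymptotic from i) and combine it with an elementary extremum computation. Optimizing $y\mapsto y^{1/\rho}e^{-\sigma y}$ (with maximum at $y=1/(\sigma\rho)$) gives the sharp inequality
\begin{equation*}
y^{1/\rho}\leq\frac{e^{\sigma y}}{(e\sigma\rho)^{1/\rho}},\qquad y>0,
\end{equation*}
which remains valid with the smaller constant $\sigma'$ in the denominator since $\sigma'<\sigma$. On the other hand, from i) and a choice of small $\eta>0$, one obtains some $t_0$ so that $\ln\varphi(t)-\ln\varphi(t')\leq(1/\rho+\eta)\ln(t/t')$ for $t\geq t'\geq t_0$ (for $t<t'$ monotonicity of $\varphi$ gives $\varphi(t)\leq\varphi(t')$). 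Choosing $\eta$ small enough relative to the gap $\sigma-\sigma'$ allows the surplus factor $(t/t')^\eta$ to be absorbed into the exponential $e^{\sigma t/t'}$, yielding iv). The main obstacle will be this last step, since converting the pointwise asymptotic of i) into a uniform exponential bound for all $t,t'\geq t_0$ with the prescribed constant $(e\sigma'\rho)^{-1/\rho}$ requires a careful balance of parameters; items i)--iii) are direct computations once the defining limits of Definition \ref{defi_Proximate_order} are invoked.
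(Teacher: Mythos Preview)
The paper does not supply its own proof of this lemma; it simply cites \cite{LG86} for i)--iii) and \cite{AIO20} for iv). Your arguments for i), ii), and iii) are correct and essentially reconstruct the standard proofs: differentiating $\ln t=\varrho(\varphi(t))\ln\varphi(t)$ for i), integrating the logarithmic derivative for ii), and the decomposition with the $\varrho'(u)u\ln u\to 0$ estimate for iii) are exactly the right moves.

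There is, however, a genuine gap in your treatment of iv). In the case $t<t'$ you invoke only monotonicity, obtaining $\varphi(t)/\varphi(t')\leq 1$. But the target bound is $\frac{e^{\sigma t/t'}}{(e\sigma'\rho)^{1/\rho}}$, and when $t/t'$ is small this right-hand side is close to $(e\sigma'\rho)^{-1/\rho}$, which is \emph{strictly less than $1$} whenever $\sigma'>1/(e\rho)$. So monotonicity alone does not yield the inequality for that entire range of parameters. The remedy is to use the two-sided integrated estimate from i): for $t<t'$ with both $\geq t_0$ one has $\varphi(t)/\varphi(t')\leq (t/t')^{1/\rho-\eta}$, and then one checks that the function $y\mapsto y^{1/\rho-\eta}(e\sigma'\rho)^{1/\rho}e^{-\sigma y}$ on $(0,1]$ has maximum value tending to $(\sigma'/\sigma)^{1/\rho}<1$ as $\eta\to 0$ (the maximum occurs at $y=(1/\rho-\eta)/\sigma$ when this lies in $(0,1]$, otherwise at $y=1$). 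Choosing $\eta$ small enough then closes the case $t<t'$ in the same spirit as your $t\geq t'$ argument. With this correction your outline for iv) goes through.
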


\section{Slice monogenic functions of proximate order}\label{Prel}

Let $\varrho(r)$ be a proximate order for a positive order $\rho>0$ according to Definition \ref{defi_Proximate_order} . We will now introduce some function spaces of entire slice monogenic functions in the spirit of \cite{LG86,AIO20}, which treat the similar problem for entire functions of several complex variables. For constant proximate order functions $\varrho(r)=\rho$ and slice monogenic functions in the quaternions we also refer to the results in \cite[Chapter 5]{ACSBOOK2}.  In particular we derive basic properties of these function spaces where most of them are already known in the complex and the monogenic setting, but which are new in the case of slice monogenic functions. Moreover, we give an alternative and more detailed proof of \cite[Theorem 1.23]{LG86} in Theorem \ref{thm_tsupernovissima1}.

\medskip

For any $\sigma>0$, we consider the Banach space
\begin{equation*}
A_{\varrho,\sigma}:=\Big\{f\in\mathcal{S\!M}_L(\mathbb{R}^{n+1})\;|\;\Vert f\Vert_{\varrho,\sigma}:=\sup_{x\in\mathbb{R}^{n+1}}|f(x)|\exp(-\sigma|x|^{\varrho(|x|)})<\infty\Big\}
\end{equation*}
with the norm $\Vert\cdot\Vert_{\varrho,\sigma}$.

\begin{remark}\label{equivalencenormalization}
We observe that for any proximate function $\varrho(r)$ and any normalization function $\hat\varrho(r)$ according to Remark \ref{bem_normalization}, the spaces $A_{\varrho,\sigma}$ and $A_{\hat \varrho,\sigma}$ coincide with equivalent norms, see \cite[Page 8]{AIO20}.
\end{remark}

\begin{lemma}\label{lsupernova1}
If $\sigma_2>\sigma_1>0$, then the inclusion map $A_{\varrho,\sigma_1}\xhookrightarrow{} A_{\varrho,\sigma_2}$ is compact.
\end{lemma}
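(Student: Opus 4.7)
The plan is to show that any sequence $(f_n)_{n\in\mathbb{N}}\subset A_{\varrho,\sigma_1}$ bounded in that norm, say $\Vert f_n\Vert_{\varrho,\sigma_1}\leq M$, has a subsequence converging in the $A_{\varrho,\sigma_2}$-norm. Two ingredients are needed: a Montel-type extraction principle for entire slice monogenic functions of controlled growth, and the exploitation of the strict gap $\sigma_1<\sigma_2$ together with the blowup $|x|^{\varrho(|x|)}\to\infty$ from Lemma \ref{lem_Properties_of_proximate_orders}.

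First I would produce a candidate limit. The norm bound gives $|f_n(x)|\leq M\exp(\sigma_1|x|^{\varrho(|x|)})$, which is uniform on compact sets. Expanding $f_n$ via Theorem \ref{PowSerThm} around the origin as $f_n(x)=\sum_{k=0}^\infty x^ka_k^{(n)}$, and applying Cauchy estimates derived from the Cauchy integral formula (Theorem \ref{CauchygeneraleMONOG}) over a ball of radius $r$, I get
\begin{equation*}
|a_k^{(n)}|\leq M\, r^{-k}\exp(\sigma_1 r^{\varrho(r)}),\qquad r>0,\ k\in\mathbb{N}_0.
\end{equation*}
A diagonal extraction then yields a subsequence $(f_{n_j})$ and coefficients $a_k$ with $a_k^{(n_j)}\to a_k$ for every $k$. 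The same bound carries over to the limit, so the power series $f(x):=\sum_{k=0}^\infty x^ka_k$ defines an entire slice monogenic function with $\Vert f\Vert_{\varrho,\sigma_1}\leq M$, and minimizing the tail $r^{-k}\exp(\sigma_1 r^{\varrho(r)})$ in $r$ gives $f_{n_j}\to f$ uniformly on every compact subset of $\mathbb{R}^{n+1}$.

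Next, given $\varepsilon>0$, I would split the supremum defining $\Vert f_{n_j}-f\Vert_{\varrho,\sigma_2}$ at a radius $R$ to be chosen. On the exterior $|x|>R$, the triangle inequality yields
\begin{equation*}
|f_{n_j}(x)-f(x)|\exp(-\sigma_2|x|^{\varrho(|x|)})\leq 2M\exp\bigl((\sigma_1-\sigma_2)|x|^{\varrho(|x|)}\bigr),
\end{equation*}
and since $\sigma_1-\sigma_2<0$ while $|x|^{\varrho(|x|)}\to\infty$ by Lemma \ref{lem_Properties_of_proximate_orders}, one can fix $R$ so that this bound is $\leq\varepsilon/2$ uniformly in $j$. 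On the ball $|x|\leq R$ the weight $\exp(-\sigma_2|x|^{\varrho(|x|)})$ is bounded, so the local uniform convergence established above gives $\sup_{|x|\leq R}|f_{n_j}(x)-f(x)|\leq\varepsilon/2$ for $j$ large enough. Combining both estimates proves $\Vert f_{n_j}-f\Vert_{\varrho,\sigma_2}\to 0$.

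The main obstacle is the first step: a Montel/normal-family statement tailored to entire slice monogenic functions with the prescribed growth is not quoted from the preliminaries, so the Cauchy integral formula of Theorem \ref{CauchygeneraleMONOG} must be used to convert the growth bound into the coefficient estimates above, which then allow a diagonal extraction via the Taylor expansion of Theorem \ref{PowSerThm}. Once the candidate limit is secured, the split-and-estimate argument is routine and the strict inequality $\sigma_2>\sigma_1$ is precisely what makes the tail vanish.
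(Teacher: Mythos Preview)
Your proposal is correct and reaches the same conclusion, but the route to local uniform convergence differs from the paper's. The paper proves equicontinuity of the bounded family directly: it differentiates the slice Cauchy kernel $S_L^{-1}(s,x)$ with respect to each real coordinate $x_i$, bounds $|\partial_{x_i}f_k(x)|$ on a fixed compact set via the Cauchy integral over a larger ball, and then invokes Arzel\`a--Ascoli (plus a diagonal argument over an exhaustion) to extract a locally uniformly convergent subsequence. You instead work through Taylor coefficients: the Cauchy estimates $|a_k^{(n)}|\leq M\,r^{-k}\exp(\sigma_1 r^{\varrho(r)})$ are uniform in $n$, diagonal extraction gives $a_k^{(n_j)}\to a_k$, and the uniform coefficient decay upgrades this to uniform convergence on compacta. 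Your method avoids the explicit (and somewhat delicate) computation of $\partial_{x_i}S_L^{-1}$ in the noncommutative setting and dovetails with the coefficient characterizations used later in Theorem~\ref{thm_tsupernovissima1}; the paper's method, on the other hand, yields explicit gradient bounds and does not rely on the power-series structure. The second half of both arguments---splitting at a radius $R$ and using $(\sigma_1-\sigma_2)|x|^{\varrho(|x|)}\to-\infty$---is essentially identical, except that the paper shows the subsequence is Cauchy in $A_{\varrho,\sigma_2}$ rather than first constructing the limit $f$.

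One minor point: the claim $\Vert f\Vert_{\varrho,\sigma_1}\leq M$ is best obtained from the pointwise convergence $f_{n_j}(x)\to f(x)$ (which you already have) rather than by summing the coefficient bounds, since summing $|a_k|\,|x|^k$ with the stated estimate does not directly recover the sharp constant $M$. This is harmless for the tail estimate, where any uniform $A_{\varrho,\sigma_1}$ bound on $f$ suffices.
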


\begin{proof}
We will show that $B:=\{f\in A_{\varrho,\sigma_1}\;|\;\Vert f\Vert_{\varrho,\sigma_1}\leq 1\}$ is relatively compact in $A_{\varrho,\sigma_2}$, i.e., any sequence $\{f_k\}_{k\in\mathbb N}\subset B$ has an accumulation point with respect to the norm of $A_{\varrho,\sigma_2}$.

First we will prove that the sequence $\{f_k\}_{k\in\mathbb N}\subset B$ admits a subsequence which converges in the uniform convergence topology of $\mathbb{R}^{n+1}$. By the Arzel\'a-Ascoli theorem, together with some standard diagonal sequence argument, it is sufficient to prove that $\{f_k\}_{k\in\mathbb N}$ is equicontinuous and uniformly bounded on any compact convex subset $K\subseteq\mathbb R^{n+1}$. Let us now fix one of the compact convex subsets $K$. Since $\{f_k\}_{k\in\mathbb N}\subset B$, the sequence is uniformly bounded on $K$. Moreover, we have
\begin{equation*}
|f_k(x)-f_k(y)|\leq C_k|x-y|,\qquad x,y\in K
\end{equation*}
where $C_k=\sup_{x\in K}|\nabla f_k(x)|$ and $\nabla$ is the usual gradient. We choose $r$ large enough in a such way that $K\subset B(0,r)$. It is sufficient to prove that there exists a constant $C_K$, only depending on $K$, such that
\begin{equation*}
|\partial_{x_i}f_k(x)|\leq C_K,\qquad x\in K,\,k\in\mathbb{N},\,i=0,\dots,n.
\end{equation*}
To prove this fact we need to differentiate the integral representation formula \eqref{cauchynuovo} for $f_k$. Let $x\in K$ and $j\in \mathbb S$ be such that there exist $u,\, v\in\mathbb R$ with $x=u+jv$. When $i=0$, there exists a positive constant $C'_K$ which depends only on $K$, such that
\begin{align*}
|\partial_{x_0}f_k(x)|&=\frac{1}{2 \pi}\Big|\int_{\partial (B(0,r)\cap\mathbb{C}_j)}\partial_{x_0}S_L^{-1}(s,x)ds_jf_k(s)\Big| \\
&=\frac{1}{2\pi}\Big|\int_{\partial (B(0,r)\cap\mathbb{C}_j)}\frac{1}{(s-x)^2}ds_jf_k(s)\Big| \\
&\leq\frac{M(r,f_k)}{2\pi}\int_{\partial(B(0,r)\cap\mathbb C_j)}\frac{1}{|x-s|^2}|ds_j| \\
&\leq\frac{rM(r,f_k)}{(r-|x|)^{2}}\leq C'_K,
\end{align*}
where from $\{f_k\}_{k\in\mathbb N}\subset B$ and $\operatorname{dist}(K,\partial B(0,r))>0$ the last inequality follows. When $i\neq 0$, using the second form for $S^{-1}_L(s,x)$ in \eqref{Eq_SL}, we observe that
\begin{equation*}
\partial_{x_i}S_L^{-1}(s,x)=(e_is^2-2x_0e_is+|x|^2e_i-2x_is+2x_i\overline{x})(s^2-2x_0s+|x|^2)^{-2}.
\end{equation*}
Thus, when $x\in\mathbb{C}_j\cap K$ and $s\in\mathbb{C}_j\cap\partial B(0,r)$, there exists a positive constant $C_2$, which depends only on $K$, such that
\begin{equation*}
|\partial_{x_i}S_L^{-1}(s,x)|\leq\frac{C_2}{|s-x|^2|s-\overline{x}|^2}.
\end{equation*}
By this inequality, there exists another positive constant $C''_K$, which only depends on $K$, such that
\begin{align*}
|\partial_{x_i}f_k(x)|&=\frac{1}{2\pi}\Big|\int_{\partial(B(0,r)\cap\mathbb{C}_j)}\partial_{x_i}S_L^{-1}(s,x)ds_jf_k(s)\Big| \\
&\leq\frac{M(r,f_k)}{2\pi}\int_{\partial(B(0,r)\cap\mathbb{C}_j)}|\partial_{x_i}S_L^{-1}(s,x)||ds_j| \\
&\leq rC_2\frac{M(r,f_k)}{(r-|x|)^{4}}\leq C''_K.
\end{align*}
In particular, choosing $C_K=\max\{C'_K,C''_K\}$, for any $x,y\in K$ and for any $k\in\mathbb N$ we have
\begin{equation*}
|f_k(x)-f_k(y)|\leq\sqrt{n}C_K|x-y|,
\end{equation*}
i.e., $\{f_k\}_{k\in\mathbb N}$ is equicontinuous on $K$. Applying the Arzel\'a-Ascoli theorem and using some standard diagonal sequence argument, there exists a subsequence which converges to $f\in\mathcal{S\!M}_L(\mathbb R^{n+1})$ in the topology of the uniform convergence on compact subsets. Without loss of generality we call this subsequence again by $\{f_k\}_{k\in\mathbb{N}}$.

\medskip

In the second step we prove that the sequence $\{f_k\}_{k\in\mathbb N}$ is a Cauchy sequence in $A_{\varrho,\sigma_2}$. We fix $\delta>0$ and choose $R>0$ large enough such that
\begin{equation*}
\exp\big((\sigma_1-\sigma_2)|x|^{\varrho(|x|)}\big)\leq\frac{\delta}{2},\qquad\text{for any }|x|\geq R.
\end{equation*}
Thus, since $f_k,f_\ell\in B$, we have
\begin{align}
\sup_{|x|\geq R}&|f_k(x)-f_\ell(x)|\exp\big(-\sigma_2|x|^{\varrho(|x|)}\big) \notag \\
&=\sup_{|x|\geq R}|f_k(x)-f_\ell(x)|\exp\big(-\sigma_1|x|^{\varrho(|x|)}\big)\exp\big((\sigma_1-\sigma_2)|x|^{\varrho(|x|)}\big) \notag \\
&\leq 2\,\frac{\delta}{2}=\delta. \label{Eq_Compact_inclusion_1}
\end{align}
Moreover, by the uniform convergence of the sequence $\{f_k\}_{k\in\mathbb N}$ on the compact subset $\overline{B(0,R)}$ of $\mathbb{R}^{n+1}$, there exists a positive integer $N$ such that for any $k,\ell\geq N$ we have
\begin{equation}\label{Eq_Compact_inclusion_2}
\sup_{|x|\leq R}|f_k(x)-f_\ell(x)|\exp\big(-\sigma_2|x|^{\varrho(|x|)}\big)\leq\sup_{|x|\leq R}|f_k(x)-f_\ell(x)|\leq\delta.
\end{equation}
Thus, combining \eqref{Eq_Compact_inclusion_1} and \eqref{Eq_Compact_inclusion_2} we have proved that the sequence $\{f_k\}_{k\in\mathbb N}$ is a Cauchy sequence in $A_{\varrho,\sigma_2}$.
\end{proof}

\begin{definition}[The spaces $A_\varrho$ and $A_{\varrho,\sigma+0}$]\label{defi_Arho0}
We define the space
\begin{equation*}
A_\varrho:=\lim_{\underset{\sigma>0}{\to}}A_{\varrho,\sigma},
\end{equation*}
i.e. $A_\varrho=\bigcup_{\sigma>0}A_{\varrho,\sigma}$ and we say that a sequence $\{f_k\}_{k\in\mathbb{N}}\subseteq A_\varrho$ converges to $f\in A_\varrho$ if there exists $\sigma>0$ such that $\{f_k\}_{k\in\mathbb{N}}\subseteq A_{\varrho,\sigma}$, $f\in A_{\varrho,\sigma}$ and $\lim_{k\to\infty}\Vert f_k-f\Vert_{\varrho,\sigma}=0$.

For every $\sigma\geq 0$ we also define the space
\begin{equation*}
A_{\varrho,\sigma+0}:=\lim_{\underset{\epsilon>0}{\leftarrow}}A_{\varrho,\sigma+\epsilon},
\end{equation*}
i.e., $A_{\varrho,\sigma+0}:=\bigcap_{\epsilon>0} A_{\varrho,\sigma+\epsilon}$ and we say that a sequence $\{f_k\}_{k\in\mathbb{N}}\subseteq A_{\varrho,\sigma+0}$ converges to $f\in A_{\varrho,\sigma+0}$ if for any $\epsilon>0$ we have $\{f_k\}_{k\in\mathbb N}\subseteq A_{\varrho,\sigma+\epsilon}$, $f\in A_{\varrho,\sigma+\epsilon}$ and $\lim_{k\to\infty}\Vert f_k-f\Vert_{\varrho,\sigma+\epsilon}=0$. When $\sigma=0$, we denote $A_{\varrho,+0}:=A_{\varrho,0+0}$.
\end{definition}

Note that $A_\varrho$ is a (DFS)-space (see \cite[Definition 2.2.1 and Section 2.6]{BG95}) and $A_{\varrho,\sigma+0}$ is a (FS)-space. Both $A_\varrho$ and $A_{\varrho,\sigma+0}$ are locally convex spaces.

\begin{remark}
Let $\varrho$ be a proximate order function and $\hat{\rho}$ its normalization according to Remark \ref{bem_normalization}. Then the spaces $A_\varrho$ and $A_{\hat{\varrho}}$ coincide and share the same locally convex topology. The same holds true for $A_{\varrho,\sigma+0}$ and $A_{\hat{\varrho},\sigma+0}$.
\end{remark}

\begin{definition}
Let $\varrho$ be a proximate order function and $f\in A_\varrho$. Then we define the \textit{type} of $f$ with respect to $\varrho$ as
\begin{equation*}
\inf\{\beta>0\;|\;f\in A_{\varrho,\beta}\}.
\end{equation*}
\end{definition}

Next we prove a preparatory lemma for Theorem \ref{thm_tsupernovissima1}, which characterises the order.

\begin{lemma}\label{l0706231}
Let $\varrho$ be a normalized proximate order for the positive order $\rho$. Let $f(x)=\sum_{\ell=0}^\infty x^\ell a_\ell\in A_\varrho$. Moreover, suppose that $\sigma\geq 0$ which satisfies the property
\begin{equation*}
\frac{1}{\rho}\ln(\sigma)\geq\limsup_{\ell\to\infty}\Big(\frac{1}{\ell}\ln|a_\ell|+\ln(\varphi(\ell))\Big)-\frac{1}{\rho}-\frac{\ln(\rho)}{\rho},
\end{equation*}
where we interpret $\ln(0)=-\infty$ in the case $\sigma=0$. Then, for any $\tau'>\sigma$, there exist positive constants $N$ and $C$ such that
\begin{equation}\label{Eq_Arhosigma_characterisation_6}
\sup\limits_{\ell\geq N}\big(\ln|a_\ell|+\ell\ln(r)\big)\leq \tau'r^{\rho(r)}+C,\qquad r>0.
\end{equation}
\end{lemma}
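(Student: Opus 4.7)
The plan is to translate the hypothesis into a pointwise bound on the coefficients $|a_\ell|$ and then to optimise the resulting expression in $\ell$ for each fixed $r>0$, applying Lemma \ref{l2505231} ii) to convert the optimum into a multiple of $r^{\varrho(r)}$.

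First I would reformulate the hypothesis. Multiplying it by $\rho$, exponentiating, and using $G_\ell=\varphi(\ell)^\ell/(e\rho)^{\ell/\rho}$ from \eqref{Eq_Gl}, the assumption is equivalent to
$$\limsup_{\ell\to\infty}|a_\ell|^{1/\ell}\varphi(\ell)\leq (e\rho\sigma)^{1/\rho}.$$
Given $\tau'>\sigma$, I choose $\tau\in(\sigma,\tau')$ and $\epsilon>0$ small enough that $(1+\epsilon)\tau<\tau'$. There then exists $N\in\mathbb{N}$ with $|a_\ell|\leq (e\rho\tau)^{\ell/\rho}/\varphi(\ell)^\ell$ for all $\ell\geq N$, and hence for every $r>0$ and $\ell\geq N$,
$$\ln|a_\ell|+\ell\ln r\leq h_r(\ell):=\ell\Big(\ln\frac{r}{\varphi(\ell)}+\frac{1}{\rho}\ln(e\rho\tau)\Big).$$
The task reduces to bounding $\sup_{\ell\geq N}h_r(\ell)$ by $\tau' r^{\varrho(r)}+C$ uniformly in $r>0$.

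Next I would perform the optimisation in $\ell$ continuously. Writing $t=\varphi(\ell)$, equivalently $\ell=t^{\varrho(t)}$, the function $h_r(\ell)$ becomes $H_r(t)=t^{\varrho(t)}\big(\ln(r/t)+\tfrac{1}{\rho}\ln(e\rho\tau)\big)$, and the discrete supremum is dominated by $\sup_{t\geq\varphi(N)}H_r(t)$. Computing $(t^{\varrho(t)})'=t^{\varrho(t)}\big(\varrho'(t)\ln t+\varrho(t)/t\big)$ and invoking $\varrho'(t)t\ln t\to 0$ and $\varrho(t)\to\rho$ from Definition \ref{defi_Proximate_order}, the critical-point condition $H_r'(t)=0$ reduces asymptotically to $\rho\big(\ln(r/t)+\tfrac{1}{\rho}\ln(e\rho\tau)\big)=1$, whose solution is $t_\star=(\rho\tau)^{1/\rho}r$. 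At $t_\star$ the bracket equals $\tfrac{1}{\rho}$, giving $H_r(t_\star)=t_\star^{\varrho(t_\star)}/\rho$. I would then invoke Lemma \ref{l2505231} ii) with $s=(\rho\tau)^{1/\rho}$ to obtain
$$t_\star^{\varrho(t_\star)}=\big((\rho\tau)^{1/\rho}r\big)^{\varrho((\rho\tau)^{1/\rho}r)}\leq (1+\epsilon)\rho\tau\,r^{\varrho(r)}+C_\epsilon,$$
and hence $H_r(t_\star)\leq (1+\epsilon)\tau\,r^{\varrho(r)}+C_\epsilon/\rho\leq \tau'\,r^{\varrho(r)}+C_\epsilon/\rho$ for all sufficiently large $r$.

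For the complementary range $r\in(0,R_0]$ the critical point $t_\star$ lies below $\varphi(N)$ and $H_r$ is decreasing on $[\varphi(N),\infty)$, so $\sup_{t\geq\varphi(N)}H_r(t)\leq H_r(\varphi(N))=N\big(\ln(r/\varphi(N))+\tfrac{1}{\rho}\ln(e\rho\tau)\big)$, which is bounded above uniformly for $r\in(0,R_0]$ and can be absorbed by enlarging $C$. The main technical obstacle will be converting the purely asymptotic content of Lemma \ref{l3005231} i) and Definition \ref{defi_Proximate_order}(2) into effective inequalities valid from some $r$ onward, so that the critical-point computation is rigorous rather than heuristic; the small losses incurred in the process are harmlessly absorbed by the slack $\sigma<\tau<\tau'$ combined with the $(1+\epsilon)$-freedom in Lemma \ref{l2505231} ii).
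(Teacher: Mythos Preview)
Your approach is essentially the same as the paper's: bound the coefficients from the hypothesis, pass to a continuous optimisation in the index, locate the maximiser, and invoke Lemma~\ref{l2505231}~ii) to convert the optimum into $\tau' r^{\varrho(r)}+C$, with small $r$ handled separately by monotonicity. The only difference is the change of variables---you substitute $t=\varphi(\ell)$ and optimise $H_r(t)=t^{\varrho(t)}\big(\ln(r/t)+\tfrac{1}{\rho}\ln(e\rho\tau)\big)$, whereas the paper keeps the index as the running variable and optimises $\mu_r(t)=t\big(\tfrac{1}{\rho}-\ln\varphi(\tfrac{t}{\sigma''\rho})+\ln r\big)$---and the paper resolves the technical obstacle you flag by working with the genuine maximiser $t_{\max}(r)$ from the outset and using the effective bound from Lemma~\ref{l3005231}~i) on the first-order condition, exactly the rigorisation you anticipate.
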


\begin{proof}
Choose $\sigma<\sigma'<\sigma''<\tau<\tau'$ arbitrary. Then, by assumption there exists $N_1\in\mathbb{N}$, such that
\begin{equation*}
|a_\ell|^{\frac{1}{\ell}}\varphi(\ell)\leq(e\rho\sigma')^{\frac{1}{\rho}},\qquad\ell\geq N_1.
\end{equation*}
Moreover, by the limit $\lim_{\ell\to\infty}\frac{\varphi(\frac{\ell}{\rho\sigma''})}{\varphi(\ell)}=(\frac{1}{\rho\sigma''})^{\frac{1}{\rho}}$ from Lemma \ref{l3005231} (ii), there exists some $N_2\geq N_1$ such that $\frac{\varphi(\frac{\ell}{\sigma''\rho})}{\varphi(\ell)}\leq(\frac{1}{\rho\sigma'})^{\frac{1}{\rho}}$ and hence
\begin{equation}\label{Eq_Arhosigma_characterisation_3}
|a_\ell|^{\frac{1}{\ell}}\leq\frac{(e\rho\sigma')^{\frac{1}{\rho}}}{\varphi(\ell)}\leq\frac{e^{\frac{1}{\rho}}}{\varphi(\frac{\ell}{\sigma''\rho})},\qquad\ell\geq N_2.
\end{equation}
Next, by Lemma \ref{l3005231} (i), there exists some $N\geq N_2$ such that
\begin{equation}\label{Eq_Arhosigma_characterisation_1}
-1\leq\frac{\frac{t}{\sigma''\rho}\varphi'(\frac{t}{\sigma''\rho})}{\varphi(\frac{t}{\sigma''\rho})}-\frac{1}{\rho}\leq\frac{\tau-\sigma''}{\rho\sigma''},\qquad t\geq N.
\end{equation}
We will now prove that there exists a positive constant $C$, which does not depend on $r$, and $r_0>0$, such that
\begin{equation}\label{Eq_Arhosigma_characterisation_4}
\sup\limits_{\ell\geq N}\big(\ln|a_\ell|+\ell\ln(r)\big)\leq\tau r^{\rho(r)} + C,\quad r>r_0.
\end{equation}
Due to the estimate \eqref{Eq_Arhosigma_characterisation_3} we get
\begin{align}
\sup\limits_{\ell\geq N}\big(\ln|a_\ell|+\ell\ln(r)\big)&\leq\sup\limits_{\mathbb{N}\ni\ell\geq N}\ell\Big(\frac{1}{\rho}-\ln\Big(\varphi\Big(\frac{\ell}{\sigma''\rho}\Big)\Big)+\ln(r)\Big) \notag \\
&\leq\sup\limits_{\mathbb{R}\ni t\geq N}\underbrace{t\Big(\frac{1}{\rho}-\ln\Big(\varphi\Big(\frac{t}{\sigma''\rho}\Big)\Big)+\ln(r)\Big)}_{\eqqcolon\mu_r(t)}. \label{Eq_Arhosigma_characterisation_2}
\end{align}
Let $t_\text{max}(r)$ be the supremum of the points where the function $\mu_r$ attains its maximum. We observe that this point exists and it is finite since $\mu_r$ is continuous and converges to $-\infty$ when $t\to\infty$. First we prove that $t_\text{max}(r)\to\infty$ when $r\to\infty$. We observe that
\begin{equation*}
\mu_r'(t)=\frac{1}{\rho}-\ln\Big(\varphi\Big(\frac{t}{\sigma''\rho}\Big)\Big)+\ln(r)-\frac{\frac{t}{\sigma''\rho}\varphi'(\frac{t}{\sigma''\rho})}{\varphi(\frac{t}{\sigma''\rho})}.
\end{equation*}
We assume by contradiction that $t_\text{max}(r)$ is bounded. Since $\mu'_r(t_\text{max}(r))\leq 0$, we have
\begin{equation}\label{Eq_Arhosigma_characterisation_7}
\ln\bigg(\varphi\Big(\frac{t_\text{max}(r)}{\sigma''\rho}\Big)\bigg)\geq\frac{1}{\rho}+\ln(r)-\frac{\frac{t_\text{max}(r)}{\sigma''\rho}\varphi'(\frac{t_\text{max}(r)}{\sigma''\rho})}{\varphi(\frac{t_\text{max}(r)}{\sigma''\rho})}.
\end{equation}
The previous inequality gives a contradiction since the left hand side is bounded for any $r>0$, instead the right hand side tends to $\infty$ when $r\to\infty$.

\medskip

Since we just have proven that $t_\text{max}(r)\overset{r\to\infty}{\longrightarrow}\infty$ there exists some $r_0$ such that $t_\text{max}(r)>N$ for any $r\geq r_0$ and hence also $\mu_r'(t_\text{max}(r))=0$ has to be satisfied for any $r\geq r_0$. This means that the inequality \eqref{Eq_Arhosigma_characterisation_7} beomes an equation, i.e.
\begin{equation*}
\ln\bigg(\frac{1}{r}\varphi\Big(\frac{t_\text{max}(r)}{\sigma''\rho}\Big)\bigg)=\frac{1}{\rho}-\frac{\frac{t_\text{max}(r)}{\sigma''\rho}\varphi'(\frac{t_\text{max}(r)}{\sigma''\rho})}{\varphi(\frac{t_\text{max}(r)}{\sigma''\rho})}.
\end{equation*}
In view of Lemma \ref{l3005231} (i), we have
\begin{equation*}
\lim_{r\to\infty}\ln\bigg(\frac{1}{r}\varphi\Big(\frac{t_\text{max}(r)}{\sigma''\rho}\Big)\bigg)=0.
\end{equation*}
Next we choose $\epsilon,\eta>0$ such that $(1+\eta)(1+\epsilon)^\rho\tau\leq\tau'$. By the previous limit, we can enlarge $r_0>0$ such that
\begin{equation*}
\varphi\Big(\frac{t_\text{max}(r)}{\sigma''\rho}\Big)\leq(1+\epsilon)r,\qquad r\geq r_0.
\end{equation*}
By applying the inverse function $\varphi^{-1}(r)=r^{\varrho(r)}$ to both sides of the this inequality and by Lemma \ref{l2505231} ii) with the above chosen $\eta$, we get
\begin{equation}\label{e0606231}
t_\text{max}(r)\leq\sigma''\rho((1+\epsilon)r)^{\varrho((1+\epsilon)r)}\leq\sigma''\rho(1+\eta)(1+\epsilon)^\rho r^{\varrho(r)}+\sigma''\rho C_\eta.
\end{equation}
Moreover, rearranging the equation $\mu_r'(t_\text{max}(r))=0$ and using the second inequality in \eqref{Eq_Arhosigma_characterisation_1}, we have that
\begin{equation}\label{e09061753}
\ln\bigg(\varphi\Big(\frac{t_\text{max}(r)}{\sigma''\rho}\Big)\bigg)=\ln(r)+\frac{1}{\rho}-\frac{\frac{t_\text{max}(r)}{\sigma''\rho}\varphi'(\frac{t_\text{max}(r)}{\sigma''\rho})}{\varphi(\frac{t_\text{max}(r)}{\sigma''\rho})}\geq\ln(r)-\frac{\tau-\sigma''}{\rho\sigma''}.
\end{equation}
Using the inequalities \eqref{e0606231} and \eqref{e09061753} in \eqref{Eq_Arhosigma_characterisation_2}, we obtain for every $r\geq r_0$
\begin{align}
\sup\limits_{\ell\geq N}\big(&\ln|a_\ell|+\ell\ln(r)\big)=\mu_r(t_\text{max}(r)) \notag \\
&=t_\text{max}(r)\Big(\frac{1}{\rho}-\ln\Big(\varphi\Big(\frac{t_\text{max}(r)}{\sigma''\rho}\Big)\Big)+\ln(r)\Big) \notag \\
&\leq(\sigma''\rho(1+\eta)(1+\epsilon)^\rho r^{\varrho(r)}+\sigma''\rho C_\eta)\Big(\frac{1}{\rho}-\ln(r)+\frac{\tau-\sigma''}{\rho\sigma''}+\ln(r)\Big) \notag \\
&\leq\tau'r^{\rho(r)}+\tau'C_\eta. \label{e09061816}
\end{align}
Since $\ln(r)$ is increasing we furthermore get the estimate
\begin{equation*}
\sup\limits_{\ell\geq N}\big(\ln|a_\ell|+\ell\ln(r)\big)\leq\sup\limits_{\ell\geq N}\big(\ln|a_\ell|+\ell\ln(r_0)\big)\leq\tau'r_0^{\rho(r_0)}+\tau'C_\eta,\quad r\in(0,r_0].
\end{equation*}
Hence, choosing $C'=\tau'r_0^{\rho(r_0)}+\tau'C_\eta$, we finally have
\begin{equation*}
\sup\limits_{\ell\geq N}\big(\ln|a_\ell|+\ell\ln(r)\big)\leq \tau'r^{\rho(r)}+C',\qquad r>0. \qedhere
\end{equation*}
\end{proof}

Next we prove the main theorem of this section, a characterization of functions in the spaces $A_{\varrho,\sigma+0}$ with respect to the order, their growth condition and their Taylor coefficients.

\begin{theorem}\label{thm_tsupernovissima1}
Let $\varrho$ be a normalized proximate order function, $\sigma\geq 0$ and $f(x)=\sum_{\ell=0}^\infty x^\ell a_\ell\in\mathcal{S\!M}_L(\mathbb{R}^{n+1})$. Then the following four statements are equivalent:

\begin{enumerate}
\item $f\in A_{\varrho,\sigma+0}$; \\
\item $\limsup_{r\to\infty}\frac{\sup_{|x|\leq r}\ln|f(x)|}{r^{\varrho(r)}}\leq\sigma$; \\
\item $\limsup_{\ell\to\infty}|a_\ell|^{\frac{1}{\ell}}\varphi(\ell)\leq(e\rho\sigma)^{\frac{1}{\rho}}$; \\
\item $\inf\{\beta>0\;|\;f\in A_{\varrho,\beta}\}\leq\sigma$.
\end{enumerate}
\end{theorem}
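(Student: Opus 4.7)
I will prove the cyclic chain of implications (1) $\Rightarrow$ (2) $\Rightarrow$ (3) $\Rightarrow$ (4) $\Rightarrow$ (1). The equivalence (1) $\Leftrightarrow$ (4) is almost tautological once we unwind the definitions: if $f\in A_{\varrho,\sigma+0}$ then $f\in A_{\varrho,\sigma+\epsilon}$ for every $\epsilon>0$, so the infimum in (4) is $\leq\sigma+\epsilon$ for every $\epsilon$, hence $\leq\sigma$; conversely, (4) gives some $\beta\leq\sigma+\epsilon$ with $f\in A_{\varrho,\beta}\subseteq A_{\varrho,\sigma+\epsilon}$ for any $\epsilon>0$, so $f$ lies in the intersection.

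\textbf{(1)$\Rightarrow$(2).} For any $\epsilon>0$ pick $C_\epsilon$ with $|f(x)|\leq C_\epsilon\exp((\sigma+\epsilon)|x|^{\varrho(|x|)})$. Using that $r\mapsto r^{\varrho(r)}$ is eventually increasing (Lemma \ref{lem_Properties_of_proximate_orders} i)), $\sup_{|x|\leq r}\ln|f(x)|\leq\ln C_\epsilon+(\sigma+\epsilon)r^{\varrho(r)}$ for $r$ large. Dividing by $r^{\varrho(r)}$ and using $\lim r^{\varrho(r)}=\infty$, the $\limsup$ is $\leq\sigma+\epsilon$, and $\epsilon$ was arbitrary.

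\textbf{(2)$\Rightarrow$(3).} This is the main technical step, obtained from Cauchy estimates. Pick $\sigma'>\sigma$ so that $\sup_{|x|\leq r}\ln|f(x)|\leq\sigma' r^{\varrho(r)}$ for $r\geq r_0$. From the power series expansion in Theorem \ref{PowSerThm} around the origin (valid on all of $\mathbb{R}^{n+1}$ since $f$ is entire), the Cauchy estimates give $|a_\ell|\leq M(r)/r^\ell$ where $M(r)=\sup_{|x|\leq r}|f(x)|$, hence
\begin{equation*}
\ln|a_\ell|\leq\sigma' r^{\varrho(r)}-\ell\ln r,\qquad r\geq r_0.
\end{equation*}
I then optimize in $r$. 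A heuristic calculation, differentiating $r\mapsto\sigma' r^{\varrho(r)}-\ell\ln r$ and using Definition \ref{defi_Proximate_order} (2), shows the minimum occurs near $r^{\varrho(r)}\sim\ell/(\rho\sigma')$, i.e.\ $r\sim\varphi(\ell/(\rho\sigma'))$. Substituting gives $\ln|a_\ell|\leq\ell/\rho-\ell\ln\varphi(\ell/(\rho\sigma'))$, so $|a_\ell|^{1/\ell}\leq e^{1/\rho}/\varphi(\ell/(\rho\sigma'))$. Multiplying by $\varphi(\ell)$ and applying Lemma \ref{l3005231} ii) with $s=1/(\rho\sigma')$, I obtain $\limsup|a_\ell|^{1/\ell}\varphi(\ell)\leq(e\rho\sigma')^{1/\rho}$, and $\sigma'\downarrow\sigma$ gives (3). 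The hard part is a clean rigorous optimization: I will choose a fixed approximate minimizer $r_\ell:=\varphi(\ell/(\rho\sigma'))$ and plug into the Cauchy bound directly, avoiding actual differentiation.

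\textbf{(3)$\Rightarrow$(4).} From (3) and $\ln(e\rho\sigma)/\rho=\ln\sigma/\rho+1/\rho+\ln\rho/\rho$, the hypothesis of Lemma \ref{l0706231} holds, so for any $\tau'>\sigma$ there exist $N,C$ with
\begin{equation*}
|a_\ell|r^\ell\leq\exp(\tau' r^{\varrho(r)}+C),\qquad\ell\geq N,\ r>0.
\end{equation*}
I fix $\tau>\sigma$, choose $\tau'\in(\sigma,\tau)$ and small $\alpha,\epsilon>0$ with $(1+\epsilon)(1+\alpha)^\rho\tau'\leq\tau$, and apply the bound at $r'=(1+\alpha)|x|$:
\begin{equation*}
|a_\ell||x|^\ell=(1+\alpha)^{-\ell}|a_\ell|r'^{\ell}\leq(1+\alpha)^{-\ell}\exp(\tau'r'^{\varrho(r')}+C).
\end{equation*}
Summing a geometric series over $\ell\geq N$ and applying Lemma \ref{l2505231} ii) to absorb $r'^{\varrho(r')}$ into $(1+\epsilon)(1+\alpha)^\rho|x|^{\varrho(|x|)}+C_\epsilon$, the tail is bounded by $C''\exp(\tau|x|^{\varrho(|x|)})$. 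The head $\sum_{\ell<N}|a_\ell||x|^\ell$ is a polynomial and is absorbed into the exponential. By the power series representation (Theorem \ref{PowSerThm}, extended to all of $\mathbb{R}^{n+1}$ by entirety), this majorant yields $f\in A_{\varrho,\tau}$, so the infimum in (4) is $\leq\tau$ for every $\tau>\sigma$.

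The main obstacle is the implication (2)$\Rightarrow$(3), which requires a careful one-sided choice of an approximate minimizer in the Cauchy bound and the asymptotic control of $\varphi$ provided by Lemma \ref{l3005231}. The implication (3)$\Rightarrow$(4) is delicate only in the geometric-series trick: one must not replace $r$ by $2r$ or anything large, since Lemma \ref{l2505231} ii) introduces a factor $s^\rho$ that would blow the type, and one must instead dilate by $1+\alpha$ with $\alpha$ as small as allowed by the gap $\tau-\sigma$.
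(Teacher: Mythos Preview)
Your cyclic proof is correct and largely parallels the paper's argument: your (1)$\Rightarrow$(2), (4)$\Rightarrow$(1), and (2)$\Rightarrow$(3) steps coincide with the paper's proofs of (1)$\Leftrightarrow$(2), (1)$\Leftrightarrow$(4), and (1)$\Rightarrow$(3) respectively (in particular your choice $r_\ell=\varphi(\ell/(\rho\sigma'))$ and the use of Lemma~\ref{l3005231}~ii) is exactly the paper's computation).

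The genuine difference is in (3)$\Rightarrow$(4). The paper, after invoking Lemma~\ref{l0706231}, splits $\sum_\ell |a_\ell||x|^\ell$ into three ranges $0\le\ell<N$, $N\le\ell\le m_r$, and $\ell>m_r$, where $m_r=\lfloor 2e\tau\rho_1 r^{\rho_1}\rfloor$ for an auxiliary exponent $\rho_1>\rho$; the middle block contributes $m_r$ copies of $\exp(\tau' r^{\varrho(r)})$ and the polynomially growing factor $m_r$ is absorbed into the gap $\tau-\tau'$, while the tail is controlled by showing $|a_\ell|^{1/\ell}r\le 1/2$ for $\ell>m_r$. Your dilation trick---applying the bound from Lemma~\ref{l0706231} at $r'=(1+\alpha)|x|$ and summing the resulting geometric series $\sum(1+\alpha)^{-\ell}$, then using Lemma~\ref{l2505231}~ii) with $s=1+\alpha$ to bring $r'^{\varrho(r')}$ back to $|x|^{\varrho(|x|)}$---is cleaner: it avoids the auxiliary $\rho_1$, the cutoff $m_r$, and the separate tail estimate, at the cost of one extra appeal to Lemma~\ref{l2505231}. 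Both approaches exploit the same slack $\tau>\tau'>\sigma$, but yours packages the summation more efficiently.
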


\begin{remark}\label{r1006231857}
Note that using the numbers $\{G_\ell\}_{\ell\in\mathbb{N}_0}$ from \eqref{Eq_Gl}, one can alternatively write \textit{(3)} as
\begin{equation*}
\limsup_{\ell\to\infty}(|a_\ell|G_{\varrho,\ell})^{\frac{\rho}{\ell}}\leq\sigma.
\end{equation*}
\end{remark}

\begin{proof}[Proof of Theorem \ref{thm_tsupernovissima1}]
In the first step we prove the equivalence $(2)\Leftrightarrow(4)$. First, we prove
\begin{equation}\label{e09061925}
\limsup_{r\to\infty}\frac{\sup_{|x|\leq r}\ln|f(x)|}{r^{\varrho(r)}}\geq\inf\{\beta>0\;|\;f\in A_{\varrho,\beta}\}.
\end{equation}
If $\tau>\limsup_{r\to\infty}\frac{\sup_{|x|\leq r}\ln|f(x)|}{r^{\varrho(r)}}$, then we can choose $r_0>0$ such that for any $|x|>r_0$ we have
\begin{equation*}
|f(x)|\leq\exp\big(\tau|x|^{\varrho(|x|)}\big).
\end{equation*}
Thus there exists a positive constant $C$ such that $|f(x)|\leq C\exp(\tau|x|^{\varrho(|x|)})$ for any $x\in\mathbb{R}^{n+1}$ and $f\in A_{\varrho,\tau}$ i.e $\tau\geq\inf\{\beta>0\;|\;f\in A_{\varrho,\beta}\}$. Hence \eqref{e09061925} is proven. Now we prove that
\begin{equation*}
\limsup_{r\to\infty}\frac{\sup_{|x|\leq r}\ln|f(x)|}{r^{\varrho(r)}}\leq\inf\{\beta>0\;|\;f\in A_{\varrho,\beta}\}.
\end{equation*}
If $\sigma<\limsup_{r\to\infty}\frac{\sup_{|x|\leq r}\ln|f(x)|}{r^{\varrho(r)}}$ then, given an increasing divergent sequence of positive constant $\{C_k\}_{k\in\mathbb N}$, there exists a sequence $\{x_k\}_{k\in\mathbb N}\subset \mathbb{R}^{n+1}$ such that
\begin{equation*}
|f(x_k)|\geq C_k\exp\big(\sigma|x_k|^{\varrho(|x_k|)}\big).
\end{equation*}
This can be proved observing that for any $C_k>0$ we have
\begin{equation*}
\sigma<\limsup_{r\to\infty}\frac{\sup_{|x|\leq r}\ln|f(x)|}{r^{\varrho(r)}}=\limsup_{r\to\infty}\frac{\sup_{|x|\leq r}\ln|f(x)|}{r^{\varrho(r)}}-\frac{\ln(C_k)}{r^{\varrho(r)}}.
\end{equation*}
Thus $f\notin A_{\varrho,\sigma}$ i.e $\sigma\leq\inf\{\beta>0\;|\;f\in A_{\varrho,\beta}\}$.

\medskip

Next we prove the equivalence $(1)\Leftrightarrow(2)$. We have that $f\in A_{\varrho,\sigma+0}$ if and only if for any $\epsilon>0$ there exists a $D_\epsilon>0$ such that
\begin{equation}\label{newnova1bis}
|f(x)|\leq D_\epsilon\exp\big((\sigma+\epsilon)|x|^{\varrho(|x|)}\big),\qquad x\in\mathbb{R}^{n+1}.
\end{equation}
This is equivalent to
\begin{equation}\label{newnova1}
\limsup_{r\to\infty}\frac{\sup_{|x|\leq r}\ln|f(x)|}{r^{\varrho(r)}}\leq\sigma.
\end{equation}
Indeed, supposing \eqref{newnova1bis} is true. For a fixed $\epsilon>0$ there exists $D_\epsilon>0$ such that
\begin{equation*}
\ln|f(x)|\leq\ln(D_\epsilon)+(\sigma+\epsilon)|x|^{\varrho(|x|)}.
\end{equation*}
This implies that
\begin{equation*}
\sup_{|x|<r}\ln|f(x)|\leq\ln(D_\epsilon)+(\sigma+\epsilon)r^{\varrho(r)}.
\end{equation*}
Since $r^{\varrho(r)}\to\infty$ as $r\to\infty$, we have
\begin{equation}\label{newnova1tris}
\limsup_{r\to\infty}\frac{\sup_{|x|\leq r}\ln|f(x)|}{r^{\varrho(r)}}\leq\sigma+\epsilon.
\end{equation}
Because inequality \eqref{newnova1tris} holds to be true for any $\epsilon>0$, \eqref{newnova1} is satisfied. Vice versa, if inequality \eqref{newnova1} is supposed to be true, then for any $\epsilon >0$ there exists $r_0>0$ such that for any $r\geq r_0$ we have
\begin{equation*}
\frac{\sup_{|x|\leq r}\ln|f(x)|}{r^{\varrho(r)}}\leq\sigma+\epsilon.
\end{equation*}
Thus, for any $|x|\geq r_0$, we have
\begin{equation*}
|f(x)|\leq\exp\big((\sigma+\epsilon)|x|^{\varrho(|x|)}\big).
\end{equation*}
Choosing $D_\epsilon>1$ in such a way that $|f(x)|\leq D_\epsilon$ for any $|x|\leq r_0$, we have that \eqref{newnova1bis} is true for any $x\in\mathbb{R}^{n+1}$. This verifies the equivalence of \eqref{newnova1bis} and \eqref{newnova1} and hence also $(1)\Leftrightarrow(2)$ is proven.

\medskip

Now we prove $(1)\Rightarrow(3)$. Let $f\in A_{\rho,\sigma+0}$. First we verify that
\begin{equation*}
\frac{1}{\rho}\ln(\sigma)\geq\limsup_{\ell\to\infty}\Big(\frac{1}{q}\ln|a_\ell|+\ln(\varphi(\ell))\Big)-\frac{1}{\rho}-\frac{\ln(\rho)}{\rho},
\end{equation*}
where we use the convention $\ln(0)=-\infty$ in the case $\sigma=0$. If follows from estimating the Cauchy integral formula
\begin{equation*}
a_\ell=\frac{1}{2\pi}\int_{\partial(U_r(0)\cap\mathbb{C}_j)}s^{-\ell-1}ds_jf(s),
\end{equation*}
that
\begin{equation*}
|a_\ell|\leq\frac{M(r,f)}{r^\ell},\qquad\ell\in\mathbb{N}_0.
\end{equation*}
If $\tilde{\sigma}>\sigma$ then for $r$ large we have
\begin{equation*}
M(r,f)\leq\exp\big(\tilde{\sigma}r^{\varrho(r)}\big),
\end{equation*}
and
\begin{equation}\label{kq11}
\ln|a_\ell|\leq\tilde{\sigma}r^{\varrho(r)}-\ell\ln(r).
\end{equation}
If $\ell$ is large enough, then we define $r_\ell$ to be the real number such that $\ell=\tilde{\sigma}\rho r_\ell^{\varrho(r_\ell)}$ and we have $\varphi(\frac{\ell}{\tilde{\sigma}\rho})=r_\ell$. Thus, for $\ell$ large enough we use \eqref{kq11} for $r=r_\ell$ and divide by $\ell$ and sum $\ln(\varphi(\ell))$ to both sides of inequality. Thus we have
\begin{equation}\label{kq22}
\ln\big(\varphi(\ell)|a_\ell|^{\frac{1}{\ell}}\big)<\frac{1}{\rho}+\ln\bigg(\frac{\varphi(\ell)}{\varphi(\frac \ell{\tilde{\sigma}\rho})}\bigg).
\end{equation}
By Lemma \ref{l3005231} (ii), we have
\begin{equation*}
\lim_{\ell\to\infty}\frac{\varphi(\ell)}{\varphi(\frac{\ell}{\tilde{\sigma}\rho})}=(\tilde{\sigma}\rho)^{\frac{1}{\rho}}.
\end{equation*}
Moreover, taking the $\limsup\limits_{l\to\infty}$ to both side of \eqref{kq22}, we have
\begin{equation*}
\limsup_{\ell\to\infty}\ln\big(\varphi(\ell)|a_\ell|^{\frac{1}{\ell}}\big)\leq\ln\big((e\tilde{\sigma}\rho)^{\frac{1}{\rho}}\big)
\end{equation*}
for any $\tilde\sigma>\sigma$. Thus we have proven that
\begin{equation*}
\limsup_{\ell\to\infty}\ln\big(\varphi(\ell)|a_\ell|^{\frac{1}{\ell}}\big)\leq\ln\big((e\sigma\rho)^{\frac{1}{\rho}}\big).
\end{equation*}
For the implication $(3)\Rightarrow (1)$ let $\tau>\sigma$ and choose $\tau>\tau'>\sigma$. We want to show that $f\in A_{\varrho,\tau}$ by estimating in a suitable way $\sum_{\ell=0}^\infty|a_\ell||x|^\ell$. In what follows we are going to split the previous summation in three parts. We know that
\begin{equation*}
\limsup_{\ell\to\infty}\ln\big(\varphi(\ell)|a_\ell|^{\frac{1}{\ell}}\big)\leq\ln\big((e\sigma\rho)^{\frac{1}{\rho}}\big).
\end{equation*}
Thus by Lemma \ref{l0706231} there exist positive constants $N$ and $C$ such that
\begin{equation*}
\sup\limits_{\ell\geq N}\big(\ln|a_\ell|+\ell\ln(r)\big)\leq \tau'r^{\varrho(r)}+C,\qquad r>0.
\end{equation*}
Moreover, let $\rho_1>\rho$ and fix $x\in\mathbb{R}^{n+1}$ with $r:=|x|\geq r_0$ for some $r_0>0$ large enough. Then define $m_r:=\lfloor 2e\tau\rho_1r^{\rho_1}\rfloor$. As it is shown in \eqref{Eq_Arhosigma_characterisation_3} in such a way that for any $\ell\geq m_r$ we have
\begin{equation}\label{Eq_Arhosigma_characterisation_5}
|a_\ell|^{\frac{1}{\ell}}\leq\frac{(e\rho_1\tau)^{\frac{1}{\rho_1}}}{\varphi(\ell)}\leq\frac{2^{-1}}{\varphi(\frac{\ell}{2e\tau\rho_1})}\leq\frac{2^{-1}}{\varphi(r^{\rho_1})}\leq\frac{2^{-1}}{\varphi(r^{\varrho(r)})}=\frac{2^{-1}}r,\qquad l\geq m_r.
\end{equation}
Hence we have
\begin{equation*}
|f(x)|\leq\sum\limits_{\ell=0}^{N-1}|a_\ell||x|^\ell+\sum\limits_{\ell=N}^{m_r}|a_\ell||x|^\ell+\sum\limits_{\ell=m_r+1}^\infty|a_n||x|^\ell.
\end{equation*}
We want to estimate all the three terms of the previous summation. Since the number of terms in the first summation is finite and it does not depend on $|x|$, there exists a positive constant $C''$ that depends only on $N$ such that for any $r>0$ we have
\begin{equation*}
\sum\limits_{\ell=0}^{N-1}|a_\ell||x|^\ell\leq C''\exp\big(\tau'|x|^{\varrho(|x|)}\big).
\end{equation*}
For the second summation, using \eqref{Eq_Arhosigma_characterisation_6}, we get
\begin{equation*}
\sum\limits_{\ell=N}^{m_r}|a_\ell||x|^\ell\leq m_re^C\exp\big(\tau'|x|^{\varrho(|x|)}\big)
\end{equation*}
For the third equation we use \eqref{Eq_Arhosigma_characterisation_5} to obtain the estimate
\begin{equation*}
\sum\limits_{\ell=m_r+1}^\infty|a_\ell||x|^\ell\leq \sum_{\ell=0}^\infty 2^{-\ell}=2.
\end{equation*}
Summing up the three previous inequalities, there exists a positive constant $C$ such that
\begin{align*}
|f(x)|&\leq C''\exp\big(\tau'|x|^{\varrho(|x|)}\big)+m_rC'''\exp\big(\tau'|x|^{\varrho(|x|)}\big)+2 \\
&\leq(C''+CC'''+2)\exp\big(\tau|x|^{\varrho(|x|)}\big),
\end{align*}
where in the last inequality we chose $C>0$ independent of $r$ large enough such that
\begin{equation*}
m_r\exp\big(-(\tau-\tau')r^{\rho(r)}\big)\leq C,
\end{equation*}
which is possible since $m_r=\lfloor 2e\tau\rho_1r^{\rho_1}\rfloor$ grows polynomially in $r$. Since this is true for every $|x|\geq r_0$, this implies $f\in A_{\varrho,\tau}$.
\end{proof}

The following proposition is a direct consequence of Theorem \ref{thm_tsupernovissima1}.

\begin{proposition}\label{prop_tsupernovissima1}
Let $\varrho$ be a normalized proximate order function and consider $f(x)=\sum_{\ell=0}^\infty x^\ell a_\ell\in\mathcal{S\!M}_L(\mathbb{R}^{n+1})$. Then the following four statements are equivalent:

\begin{enumerate}
\item $f\in A_\varrho$; \\
\item $\limsup_{r\to\infty}\frac{\sup_{|x|\leq r}\ln|f(x)|}{r^{\varrho(r)}}<\infty$; \\
\item $\limsup_{\ell\to\infty}|a_\ell|^{\frac{1}{\ell}}\varphi(\ell)<\infty$; \\
\item $\inf\{\beta>0\;|\;f\in A_{\varrho,\beta}\}<\infty$.
\end{enumerate}
\end{proposition}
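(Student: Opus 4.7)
The plan is to deduce Proposition \ref{prop_tsupernovissima1} from Theorem \ref{thm_tsupernovissima1} by observing that each statement $(k)$ in the proposition is equivalent to ``there exists some $\sigma\geq 0$ such that statement $(k)$ of Theorem \ref{thm_tsupernovissima1} holds''. Since Theorem \ref{thm_tsupernovissima1} gives the equivalence of its four conditions for every fixed $\sigma\geq 0$, commuting the existential quantifier on $\sigma$ with the equivalence immediately yields the four-way equivalence claimed in the proposition. All that remains is to verify the four reformulations.

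For $(2)$ and $(4)$ the reformulation is immediate: a $\limsup$ of nonnegative reals, respectively an $\inf$ of a nonempty subset of $(0,\infty)$, is finite if and only if it is bounded above by some $\sigma\geq 0$. For $(3)$, $\limsup_{\ell\to\infty}|a_\ell|^{1/\ell}\varphi(\ell)<\infty$ is equivalent to the existence of some $M\geq 0$ with $\limsup\leq M$, and setting $\sigma:=M^\rho/(e\rho)$ rewrites this as $\limsup\leq(e\rho\sigma)^{1/\rho}$ for some $\sigma\geq 0$, which is exactly condition $(3)$ of Theorem \ref{thm_tsupernovissima1}. For $(1)$, the definition $A_\varrho=\bigcup_{\sigma>0}A_{\varrho,\sigma}$ combined with the elementary exponential-weight inclusions
\begin{equation*}
A_{\varrho,\sigma}\;\subseteq\;\bigcap_{\epsilon>0}A_{\varrho,\sigma+\epsilon}=A_{\varrho,\sigma+0}\;\subseteq\;A_{\varrho,\sigma+1}\;\subseteq\;A_\varrho
\end{equation*}
shows that $f\in A_\varrho$ if and only if $f\in A_{\varrho,\sigma+0}$ for some $\sigma\geq 0$, which matches statement $(1)$ of Theorem \ref{thm_tsupernovissima1}.

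Since the argument is a repackaging of Theorem \ref{thm_tsupernovissima1} with the parameter $\sigma$ quantified existentially, no step requires substantive new analytic work; the only point needing care is the quantifier manipulation, and in particular that the reformulation of $(1)$ relies on the two inclusions above, which follow directly from the exponential weight in the definition of $\Vert\cdot\Vert_{\varrho,\sigma}$.
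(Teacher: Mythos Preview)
Your argument is correct and is exactly the approach the paper has in mind: the paper states only that the proposition is a ``direct consequence of Theorem \ref{thm_tsupernovissima1}'' without further detail, and your proof spells out precisely that passage by existentially quantifying over $\sigma$. The minor phrasing about ``nonnegative reals'' in (2) and ``nonempty subset'' in (4) is not quite accurate a priori, but the equivalences you claim hold regardless (using the conventions $\inf\emptyset=+\infty$ and allowing the $\limsup$ to be negative or $-\infty$), so the proof stands.
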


In Section \ref{SEC3} we will need some estimates on the norms of monomials, which will be provided in the following lemma.

\begin{lemma}\label{lsupersupernova1}
Let $\varrho$ be a normalized proximate order function. Then for every $0<\sigma'<\sigma$, there exists a constant $C(\sigma,\sigma')$ such that
\begin{equation}\label{Eq_Monomial_norm}
\Vert x^\ell\Vert_{\varrho,\sigma}\leq C(\sigma,\sigma')\frac{G_\ell}{\sigma'^{\ell/\varrho}},\qquad\ell\in\mathbb{N}_0.
\end{equation}
\end{lemma}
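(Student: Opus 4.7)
Starting from $|x^\ell|=|x|^\ell$, the norm reduces to
\[
\|x^\ell\|_{\varrho,\sigma}=\sup_{r\geq 0}r^\ell\exp\bigl(-\sigma r^{\varrho(r)}\bigr),
\]
which for $\ell=0$ equals $1=G_0$ and is bounded trivially. For $\ell\geq 1$, since $\varrho$ is normalized the map $r\mapsto r^{\varrho(r)}$ is a bijection of $(0,\infty)$ with inverse $\varphi$, and substituting $t=r^{\varrho(r)}$ I rewrite the supremum as $\sup_{t>0}\varphi(t)^\ell e^{-\sigma t}$. Setting the logarithmic derivative to zero and using $\tfrac{t\varphi'(t)}{\varphi(t)}\to 1/\rho$ from Lemma \ref{l3005231}(i) suggests the maximizer is $t_\ell\approx\ell/(\sigma\rho)$, with supremum value asymptotic to $G_\ell/\sigma^{\ell/\rho}$; the gap $\sigma'<\sigma$ in the statement is precisely what gives the slack to absorb the error into a constant.

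To make this rigorous, the strict inequality $\sigma/\sigma'>1$ allows me to choose auxiliary constants $\sigma_1>\sigma_2>0$ with $\sigma_1/\sigma_2<\sigma/\sigma'$ (for instance $\sigma_2=1$, $\sigma_1=\sqrt{\sigma/\sigma'}$). Applying Lemma \ref{l3005231}(iv) with these parameters yields $t_0>0$ such that
\[
\varphi(t)\leq\varphi(t')\,\frac{e^{\sigma_1 t/t'}}{(e\sigma_2\rho)^{1/\rho}},\qquad t,t'\geq t_0.
\]
Substituting $t':=\sigma_1\ell/\sigma$ (admissible once $\ell\geq\sigma t_0/\sigma_1$) and raising to the $\ell$-th power, the exponential becomes $e^{\sigma_1\ell t/t'}=e^{\sigma t}$, exactly cancelling the weight $e^{-\sigma t}$. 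Hence
\[
\sup_{t\geq t_0}\varphi(t)^\ell e^{-\sigma t}\leq\frac{\varphi(\sigma_1\ell/\sigma)^\ell}{(e\sigma_2\rho)^{\ell/\rho}}.
\]
The contribution of $t\in(0,t_0)$ is at most $\varphi(t_0)^\ell$, which is dominated by the main term for large $\ell$ since $\varphi(\ell)\to\infty$ by Lemma \ref{lem_Properties_of_proximate_orders}(ii).

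It remains to compare the right-hand side with $G_\ell/\sigma'^{\ell/\rho}=\varphi(\ell)^\ell/(e\rho\sigma')^{\ell/\rho}$. By Lemma \ref{l3005231}(ii), $\varphi(\sigma_1\ell/\sigma)/\varphi(\ell)\to(\sigma_1/\sigma)^{1/\rho}$, so fixing $\epsilon>0$ small enough that $(1+\epsilon)^\rho\sigma_1\sigma'\leq\sigma_2\sigma$ (possible since $\sigma_1\sigma'<\sigma_2\sigma$ by our choice), I obtain for all sufficiently large $\ell$
\[
\frac{\varphi(\sigma_1\ell/\sigma)^\ell}{(e\sigma_2\rho)^{\ell/\rho}}\leq\left(\frac{(1+\epsilon)^\rho\sigma_1\sigma'}{\sigma_2\sigma}\right)^{\ell/\rho}\frac{G_\ell}{\sigma'^{\ell/\rho}}\leq\frac{G_\ell}{\sigma'^{\ell/\rho}}.
\]
The finitely many remaining small-$\ell$ cases are bounded individually and their constants absorbed into $C(\sigma,\sigma')$.

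The main obstacle is the simultaneous calibration of scales: the choice $t'=\sigma_1\ell/\sigma$ is forced by the exponential cancellation in the key step, while matching the final prefactor against $\sigma'^{-\ell/\rho}$ forces the constraint $(1+\epsilon)^\rho\sigma_1\sigma'\leq\sigma_2\sigma$ after invoking Lemma \ref{l3005231}(ii). The condition $\sigma_1/\sigma_2<\sigma/\sigma'$ is precisely what reconciles both requirements, and is the structural reason for the hypothesis $\sigma'<\sigma$ in the statement.
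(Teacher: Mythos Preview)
Your proof is correct and follows essentially the same strategy as the paper: rewrite the norm as $\sup_{t>0}\varphi(t)^\ell e^{-\sigma t}$, invoke Lemma~\ref{l3005231}(iv) to kill the exponential, handle small $t$ via $\varphi(\ell)\to\infty$, and absorb finitely many small $\ell$ into the constant. The paper is slightly more direct: it applies Lemma~\ref{l3005231}(iv) with the given pair $(\sigma,\sigma')$ itself and simply takes $t'=\ell$, which makes the exponential cancel and delivers the factor $\sigma'^{-\ell/\rho}$ in one stroke, so your auxiliary parameters $\sigma_1,\sigma_2$ and the subsequent appeal to Lemma~\ref{l3005231}(ii) are not needed.
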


\begin{proof}
First of all, by Lemma \ref{l3005231} there exists some $t_0\geq 0$, such that
\begin{equation}\label{Eq_Monomial_norm_1}
\frac{\varphi(t)}{\varphi(t')}\leq\frac{e^{\sigma\frac{t}{t'}}}{(e\sigma'\varrho)^{1/\varrho}},\qquad t,t'\geq t_0.
\end{equation}
Let now $r_0:=\varphi(t_0)$. Since $\lim_{t\to\infty}\varphi(t)=\infty$, there exists another constant $t_1\geq 0$ such that
\begin{equation}\label{Eq_Monomial_norm_2}
\varphi(\ell)\geq r_0(e\rho\sigma')^{\frac{1}{\rho}},\qquad\ell\geq t_1.
\end{equation}
Let now $\ell\geq t_1$. Then for every $|x|\geq r_0$ we set $t:=|x|^{\rho(|x|)}$ and get
\begin{equation*}
\frac{|x|^\ell}{G_\ell}\exp\big(-\sigma|x|^{\varrho(|x|)}\big)=\frac{|x|^\ell(e\rho)^{\frac{\ell}{\rho}}}{\varphi(\ell)^\ell}\exp\big(-\sigma|x|^{\varrho(|x|)}\big)=\frac{\varphi(t)^\ell(e\rho)^{\frac{\ell}{\rho}}}{\varphi(\ell)^\ell}e^{-\sigma t}.
\end{equation*}
Since $t=|x|^{\rho(|x|)}\geq r_0^{\rho(r_0)}=t_0$, we are allowed to use \eqref{Eq_Monomial_norm_1} and estimate this equation by
\begin{equation}\label{Eq_Monomial_norm_3}
\frac{|x|^\ell}{G_\ell}\exp\big(-\sigma|x|^{\varrho(|x|)}\big)\leq\frac{e^{\ell\sigma\frac{t}{\ell}}(e\rho)^{\frac{\ell}{\rho}}}{(e\sigma'\rho)^{\frac{\ell}{\rho}}}e^{-\sigma t}=\frac{1}{\sigma'^{\frac{\ell}{\rho}}},\qquad|x|\geq r_0,\,\ell\geq t_1.
\end{equation}
For $|x|\leq r_0$ on the other hand, we use \eqref{Eq_Monomial_norm_2} to get
\begin{equation}\label{Eq_Monomial_norm_4}
\frac{|x|^\ell}{G_\ell}\exp\big(-\sigma|x|^{\varrho(|x|)}\big)\leq\frac{r_0^\ell}{G_\ell}=\frac{r_0^\ell(e\rho)^{\frac{\ell}{\rho}}}{\varphi(\ell)^\ell}\leq\frac{1}{\sigma'^{\frac{\ell}{\rho}}},\qquad|x|\leq r_0,\ell\geq t_1.
\end{equation}
Combining now \eqref{Eq_Monomial_norm_3} and \eqref{Eq_Monomial_norm_4} gives the estimate
\begin{equation*}
\Vert x^\ell\Vert_{\varrho,\sigma}=\sup\limits_{x\in\mathbb{R}^{n+1}}|x|^\ell\exp\big(-\sigma|x|^{\varrho(|x|)}\big)\leq\frac{G_\ell}{\sigma'^{\frac{\ell}{\rho}}},\qquad\ell\geq t_1.
\end{equation*}
Finally, choosing the constant
\begin{equation*}
C(\sigma,\sigma'):=\max\bigg\{\max\limits_{0\leq\ell\leq t_1}\frac{\Vert x^\ell\Vert_{\varrho,\sigma}\sigma'^{\frac{\ell}{\rho}}}{G_\ell},\,1\,\bigg\},
\end{equation*}
gives the stated estimate \eqref{Eq_Monomial_norm} for every $\ell\in\mathbb{N}_0$.
\end{proof}

\begin{lemma}\label{lnova1}
There exists a constant $k$ depending only on $\rho$ for which the following statement holds: For any $\sigma>0$, we can take $C(\sigma)$ such that for any $f\in A_{\hat{\varrho},\sigma}$, and any $\ell\in\mathbb{N}_0$, the inequality
\begin{equation}\label{e10061056}
\frac{1}{\ell!}\Vert\partial^\ell_{x_0}f\Vert_{\hat{\varrho},k\sigma}\leq C(\sigma)\Vert f\Vert_{\hat{\varrho},\sigma}\frac{(2k \sigma)^{\ell/\rho}}{G_{\hat{\varrho},\ell}}
\end{equation}
holds.
\end{lemma}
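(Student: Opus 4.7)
My plan is to apply a slicewise complex Cauchy estimate on the plane $\mathbb{C}_j$ through $x$, then optimize the radius of the integration contour so that the resulting bound takes the form $G_{\hat{\varrho},\ell}^{-1}\cdot(\text{const})^{\ell/\rho}$. Fix $x\in\mathbb{R}^{n+1}$ and choose $j\in\mathbb{S}$ with $x\in\mathbb{C}_j$. Since $f|_{\mathbb{C}_j}$ is $\mathbb{R}_n$-valued holomorphic and the slice derivative equals the complex derivative of the restriction by \eqref{SDerivPartial}, the standard Cauchy estimate on the circle of radius $r>0$ centered at $x$ inside $\mathbb{C}_j$, combined with monotonicity of $t\mapsto t^{\hat{\varrho}(t)}$ and $|s|\leq|x|+r$, yields
\begin{equation*}
\frac{|\partial_{x_0}^\ell f(x)|}{\ell!}\leq\frac{\sup_{|s-x|=r,\,s\in\mathbb{C}_j}|f(s)|}{r^\ell}\leq\frac{\Vert f\Vert_{\hat{\varrho},\sigma}}{r^\ell}\exp\!\bigl(\sigma(|x|+r)^{\hat{\varrho}(|x|+r)}\bigr).
\end{equation*}

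Next I fix $\varepsilon>0$ (for instance $\varepsilon=1$) and apply Lemma \ref{l2505231}(i) to obtain a constant $C_\varepsilon\geq 0$ and the value $k:=2^{\rho+\varepsilon}$, which depends only on $\rho$, with $(|x|+r)^{\hat{\varrho}(|x|+r)}\leq k\bigl(|x|^{\hat{\varrho}(|x|)}+r^{\hat{\varrho}(r)}\bigr)+C_\varepsilon$. Inserting this bound, multiplying by $\exp(-k\sigma|x|^{\hat{\varrho}(|x|)})$, and taking the supremum over $x\in\mathbb{R}^{n+1}$ yields
\begin{equation*}
\frac{\Vert\partial_{x_0}^\ell f\Vert_{\hat{\varrho},k\sigma}}{\ell!}\leq\Vert f\Vert_{\hat{\varrho},\sigma}\,e^{\sigma C_\varepsilon}\,\frac{\exp(k\sigma r^{\hat{\varrho}(r)})}{r^\ell},\qquad r>0.
\end{equation*}
For $\ell\geq 1$ I now choose $r=r_\ell:=\varphi\bigl(\ell/(2k\sigma\rho)\bigr)$, so that $r_\ell^{\hat{\varrho}(r_\ell)}=\ell/(2k\sigma\rho)$ and the right-hand factor becomes $e^{\ell/(2\rho)}/\varphi(\ell/(2k\sigma\rho))^\ell$. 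By Lemma \ref{l3005231}(ii) the ratio $\varphi(\ell/(2k\sigma\rho))/\varphi(\ell)$ converges to $(2k\sigma\rho)^{-1/\rho}$, so after fixing $\delta>0$ with $(1-\delta)e^{1/(2\rho)}\geq 1$ there is $N=N(\sigma,\delta)$ beyond which $\varphi(\ell/(2k\sigma\rho))\geq(1-\delta)(2k\sigma\rho)^{-1/\rho}\varphi(\ell)$. Using $G_{\hat{\varrho},\ell}=\varphi(\ell)^\ell/(e\rho)^{\ell/\rho}$, a direct computation then shows $e^{\ell/(2\rho)}/\varphi(\ell/(2k\sigma\rho))^\ell\leq(2k\sigma)^{\ell/\rho}/G_{\hat{\varrho},\ell}$ for all $\ell\geq N$, which is exactly the desired inequality.

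For the finitely many remaining indices $\ell\in\{0,1,\dots,N-1\}$ the same Cauchy estimate applied with, say, $r=r_\ell$ (or any fixed positive $r$ when $\ell=0$) produces a finite bound of the form $\Vert\partial_{x_0}^\ell f\Vert_{\hat{\varrho},k\sigma}/\ell!\leq c_\ell(\sigma)\Vert f\Vert_{\hat{\varrho},\sigma}$, and absorbing $e^{\sigma C_\varepsilon}$ together with $\max_{\ell<N}c_\ell(\sigma)G_{\hat{\varrho},\ell}/(2k\sigma)^{\ell/\rho}$ into a single constant $C(\sigma)\geq 1$ then gives the claim for every $\ell\in\mathbb{N}_0$. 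The hard part is the $\ell$-th power comparison in the previous paragraph: raising $\varphi(\ell/(2k\sigma\rho))/\varphi(\ell)$ to the $\ell$-th power is extremely sensitive to the convergence rate in Lemma \ref{l3005231}(ii), and it is precisely the factor $2$ built into $2k\sigma$ that provides the geometric slack $(1-\delta)e^{1/(2\rho)}\geq 1$ needed to absorb the multiplicative error; this in turn forces the threshold $N(\sigma,\delta)$, and hence $C(\sigma)$, to depend on $\sigma$, while $k$ itself depends only on $\rho$.
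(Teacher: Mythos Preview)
Your proof is correct and follows essentially the same approach as the paper: a slicewise Cauchy estimate on $\mathbb{C}_j$, then Lemma~\ref{l2505231}(i) to split $(|x|+r)^{\hat\varrho(|x|+r)}$ with $k=2^{\rho+\varepsilon}$, followed by the choice $r=\varphi(\ell/(\text{const}\cdot\sigma\rho))$ and the use of Lemma~\ref{l3005231}(ii) with a built-in factor-of-two slack to control the $\ell$-th power of the $\varphi$-ratio. The only cosmetic difference is where the factor~$2$ is placed (you put it in the argument of $\varphi$, the paper puts it as $2^{\ell/\rho}$ on the ratio bound), and you are more explicit than the paper about handling the finitely many indices $\ell<N$.
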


\begin{proof}
Let $x\in\mathbb{R}^{n+1}$ and $j\in\mathbb{S}$, $u,v\in\mathbb{R}$ such that $x=u+jv$. then the Cauchy estimates for the restriction $f|_{\mathbb{C}_j}$ give for any $s>0$
\begin{align*}
|\partial^\ell_{x_0}f(x)|&\leq\frac{\ell!}{s^\ell}\max\limits_{|\xi|=s,\xi\in\mathbb{C}_j}|f(x+\xi)| \\
&\leq\frac{\ell!}{s^\ell}\Vert f\Vert_{\varrho,\sigma}\max\limits_{|\xi|=s,\xi\in\mathbb{C}_j}\exp\big(\sigma|x+\xi|^{\varrho(|x+\xi|)}\big) \\
&\leq\frac{\ell!}{s^\ell}\Vert f\Vert_{\varrho,\sigma}\exp\big(\sigma(|x|+s)^{\varrho(|x|+s)}\big) \\
&\leq\frac{\ell!}{s^\ell}\Vert f\Vert_{\varrho,\sigma}e^{\sigma C_\varepsilon}\exp\big(2^{\rho+\varepsilon}\sigma\big(r^{\varrho(r)}+s^{\varrho(s)}\big)\big),
\end{align*}
where in the last inequality we used Lemma \ref{lem_Properties_of_proximate_orders} i). Choosing now the special value $s=\varphi(\frac{\ell}{2^{\rho+\varepsilon}\sigma\rho})$, i.e $s^{\varrho(s)}=\frac{\ell}{2^{\rho+\varepsilon}\sigma\rho}$, we can further estimate
\begin{equation*}
|\partial^\ell_{x_0}f(x)|\leq\frac{\ell!}{\varphi(\frac{\ell}{2^{\rho+\varepsilon}\sigma\rho})^\ell}\Vert f\Vert_{\varrho,\sigma}e^{\sigma C_\varepsilon}\exp\big(2^{\rho+\varepsilon}\sigma r^{\varrho(r)}+\frac{\ell}{\rho}\big),\qquad\ell\in\mathbb{N}_0.
\end{equation*}
By Lemma \ref{l3005231} ii) there exists some $N\in\mathbb{N}$ such that
\begin{equation*}
\frac{\varphi(\ell)^\ell}{\varphi(\frac{\ell}{2^{\rho+\varepsilon}\sigma\rho})^\ell}\leq 2^{\frac{\ell}{\rho}}(2^{\rho+\varepsilon}\sigma\rho)^{\frac{\ell}{\rho}},\qquad\ell\geq N.
\end{equation*}
Hence we can choose a larger constant $C_1$ such that this inequality holds for all $\ell\in\mathbb{N}_0$, i.e.
\begin{equation*}
\frac{\varphi(\ell)^\ell}{\varphi(\frac{\ell}{2^{\rho+\varepsilon}\sigma\rho})^\ell}\leq C_1(2^{\rho+1+\varepsilon}\sigma\rho)^{\frac{\ell}{\rho}},\qquad\ell\in\mathbb{N}_0.
\end{equation*}
This leads to the estimate
\begin{align*}
|\partial^\ell_{x_0}f(x)|&\leq C_1\frac{\ell!}{\varphi(\ell)^\ell}\big((2^{\rho+1+\varepsilon}e\sigma\rho)^{\frac{1}{\rho}}\big)^\ell\Vert f\Vert_{\varrho,\sigma}e^{\sigma C_\varepsilon}\exp\big(2^{\rho+\varepsilon}\sigma r^{\varrho(r)}\big) \\
&=C_1e^{\sigma C_\varepsilon}\frac{\ell!}{G_\ell}(2^{\rho+1+\varepsilon}\sigma)^{\frac{\ell}{\rho}}\Vert f\Vert_{\varrho,\sigma}\exp\big(2^{\rho+\varepsilon}\sigma r^{\varrho(r)}\big), \qquad\ell\in\mathbb{N}_0.
\end{align*}
This is exactly the stated inequality \eqref{e10061056} for $\ell\geq N$.
\end{proof}

In view of the above stated properties, we can now  prove the following crucial results.

\begin{proposition}\label{psupersupernova1}
For an entire left slice monogenic function $f(x)$ belonging to $A_{\varrho,\sigma+0}$ for $\sigma\geq 0$, its Taylor expansion $\sum_{\ell=0}^\infty x^\ell a_\ell$ converges to $f(x)$ in the space $A_{\varrho,\sigma+0}$. In particular, the set of Fueter polynomials is dense in $A_{\varrho,+0}$ and also dense in $A_\varrho$.
\end{proposition}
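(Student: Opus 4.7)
The plan is to show that the partial sums $f_N(x) := \sum_{\ell=0}^N x^\ell a_\ell$ (which converge pointwise to $f$ on all of $\mathbb{R}^{n+1}$ by Theorem \ref{PowSerThm}) converge to $f$ in $A_{\varrho,\sigma+0}$. Unwinding Definition \ref{defi_Arho0}, this reduces to proving that for every $\epsilon > 0$ the tail
\[
R_N(x) := \sum_{\ell=N+1}^\infty x^\ell a_\ell
\]
satisfies $\|R_N\|_{\varrho,\sigma+\epsilon} \to 0$ as $N \to \infty$.

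Fix $\epsilon > 0$ and choose an auxiliary parameter $\sigma'$ with $\sigma < \sigma' < \sigma + \epsilon$ (when $\sigma = 0$, any $0 < \sigma' < \epsilon$ will do). Lemma \ref{lsupersupernova1}, applied with the pair $(\sigma+\epsilon,\sigma')$ in place of $(\sigma,\sigma')$, yields a constant $C>0$ independent of $\ell$ with
\[
\|x^\ell\|_{\varrho,\sigma+\epsilon} \leq C\,\frac{G_\ell}{(\sigma')^{\ell/\rho}}, \qquad \ell \in \mathbb{N}_0.
\]
Because $f \in A_{\varrho,\sigma+0}$, the equivalence $(1)\Leftrightarrow(3)$ in Theorem \ref{thm_tsupernovissima1} together with Remark \ref{r1006231857} gives
\[
\limsup_{\ell\to\infty}\bigl(|a_\ell|\,G_\ell\bigr)^{1/\ell} \leq \sigma^{1/\rho}.
\]
Using that $|x^\ell a_\ell| \leq c_n|x|^\ell|a_\ell|$ for paravectors (the dimensional constant $c_n$ comes from the submultiplicativity of the Clifford norm, while $|x^\ell|=|x|^\ell$ holds for paravectors), the triangle inequality produces
\[
\|R_N\|_{\varrho,\sigma+\epsilon} \leq c_nC\sum_{\ell=N+1}^\infty \frac{|a_\ell|\,G_\ell}{(\sigma')^{\ell/\rho}}.
\]
Combining the two displayed bounds, the root test yields an $\ell$-th root limit at most $(\sigma/\sigma')^{1/\rho} < 1$, hence the series converges absolutely and its tail vanishes as $N \to \infty$.

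For the density part, each $f_N$ is a left slice monogenic polynomial, i.e.\ a linear combination of the Fueter-type monomials $x^\ell$ with Clifford coefficients on the right. Taking $\sigma = 0$ in the argument above proves density of these polynomials in $A_{\varrho,+0}$. For $f \in A_\varrho$ there exists $\sigma_0 > 0$ with $f \in A_{\varrho,\sigma_0} \subseteq A_{\varrho,\sigma_0+0}$, so the first part gives $f_N \to f$ in $A_{\varrho,\sigma_0+\epsilon}$ for every $\epsilon > 0$, which in particular yields convergence in $A_\varrho$ in the sense of Definition \ref{defi_Arho0}.

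The main obstacle is the careful orchestration of the auxiliary parameter $\sigma'$: Lemma \ref{lsupersupernova1} forces the strict inequality $\sigma' < \sigma + \epsilon$, while the root-test step requires $\sigma' > \sigma$, and the narrow intermediate window $(\sigma, \sigma+\epsilon)$ is precisely what supplies the geometric decay factor $(\sigma/\sigma')^{1/\rho} < 1$ needed to sum the series. This is where the proximate-order machinery of Section \ref{Prel} — in particular the $G_\ell$ identity linking coefficient growth to monomial norms — does its essential work; the rest of the argument is a straightforward combination of the root test and the definitions of the (DFS)/(FS) topologies.
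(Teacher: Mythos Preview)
Your proof is correct and follows essentially the same approach as the paper: both arguments combine the monomial bound from Lemma~\ref{lsupersupernova1} with the coefficient bound from Theorem~\ref{thm_tsupernovissima1}/Remark~\ref{r1006231857} to dominate $\sum_\ell \|x^\ell a_\ell\|_{\varrho,\sigma+\epsilon}$ by a convergent geometric series, and handle the density statements identically. The only cosmetic differences are your explicit use of the root test and the Clifford constant $c_n$, where the paper instead fixes the concrete intermediate parameters $\sigma+\epsilon/4$ and $\sigma+\epsilon/2$ and absorbs constants silently.
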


\begin{proof}
For the former statement, it suffices to show that
\begin{equation*}
\sum_{\ell=0}^\infty\Vert x^\ell a_\ell\Vert_{\varrho, \sigma+\epsilon}
\end{equation*}
is finite for any $\epsilon>0$. By Lemma \ref{lsupersupernova1}, there exists a positive constant $C_0$ such that for any $\ell\geq 0$ we have
\begin{equation*}
\Vert x^\ell\Vert_{\varrho,\sigma+\epsilon}\leq C_0(\sigma+\epsilon/2)^{-\ell/\rho}G_{\hat{\varrho},\ell}.
\end{equation*}
On the other hand, by Remark \ref{r1006231857}, there exists a positive constant $C_1$ such that for any $\ell\geq 0$ we have
\begin{equation*}
|a_\ell|G_{\hat{\varrho},\ell}\leq C_1(\sigma+\epsilon/4)^{\ell/\rho}.
\end{equation*}
Therefore, we have
\begin{equation*}
\sum_{\ell=0}^\infty\Vert x^\ell a_\ell\Vert_{\varrho,\sigma+\epsilon}\leq C_0C_1\sum_{\ell=0}^\infty\Big(\frac{\sigma+\epsilon/4}{\sigma+\epsilon/2}\Big)^{\ell/\rho}<\infty.
\end{equation*}
For the latter statement in the case $f\in A_{\varrho,+0}$, it follows from the former one with $\sigma=0$ that
\begin{equation*}
\lim_{\ell\to\infty}\sum_{q\leq\ell}x^qa_q=f(x)
\end{equation*}
in the space $A_{\varrho,+0}$. In the case $f\in A_\varrho$, there exists $\sigma>0$ such that $f\in A_{\varrho,\sigma+0}$. Then the same convergence holds in the space $A_{\varrho, \sigma+0}$ and therefore also in the space $A_\varrho$.
\end{proof}

\begin{lemma}\label{l1106231654}
Let $\rho$ be a proximate order function, $\sigma,\tau>0$, $f\in A_{\rho,\sigma}$, $g\in A_{\rho,\tau}$. Then $f\star_Lg\in A_{\rho,\sigma+\tau}$ and
\begin{equation*}
\Vert f\star_L g\Vert_{\rho,\sigma+\tau}\leq 2^{\frac{n+4}{2}}\Vert f\Vert_{\rho,\sigma}\Vert g\Vert_{\rho,\tau},
\end{equation*}
where $n$ is the number of imaginary units in the Clifford algebra $\mathbb{R}_n$.
\end{lemma}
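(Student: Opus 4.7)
The strategy is to bound $|(f\star_L g)(x)|$ pointwise for each $x\in\mathbb{R}^{n+1}$ and then take the supremum. Writing $x=u+jv$ with $j=\underline{x}/|\underline{x}|$ for nonreal $x$, the task splits into (i) controlling the components $f_0,f_1,g_0,g_1$ from the decomposition \eqref{Eq_f_decomposition} by $|f|$ and $|g|$, and (ii) estimating the Clifford product that defines $f\star_L g$.

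For (i), I will use that $u-jv\in\mathbb{R}^{n+1}$ also satisfies $|u-jv|=|u+jv|=|x|$. Evaluating \eqref{Eq_f_decomposition} at $v$ and at $-v$ and applying the compatibility relations \eqref{CCondmon} gives
\begin{equation*}
f_0(u,v)=\tfrac12\bigl(f(u+jv)+f(u-jv)\bigr),\qquad jf_1(u,v)=\tfrac12\bigl(f(u+jv)-f(u-jv)\bigr).
\end{equation*}
Left multiplication by $j\in\mathbb{S}$ is a Euclidean isometry of $\mathbb{R}_n$: extending $j_1:=j$ to an orthonormal basis $j_1,\ldots,j_n$ of $\mathbb{R}^n$ induces an orthonormal basis $\{j_A\}$ of $\mathbb{R}_n$ for the Euclidean inner product, and $L_j$ acts on it as a signed permutation. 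Hence $|f_1(u,v)|=|jf_1(u,v)|$, and combining with $|f(u\pm jv)|\leq\Vert f\Vert_{\varrho,\sigma}\exp(\sigma|x|^{\varrho(|x|)})$ yields
\begin{equation*}
|f_0(u,v)|+|f_1(u,v)|\leq 2\Vert f\Vert_{\varrho,\sigma}\exp\bigl(\sigma|x|^{\varrho(|x|)}\bigr),
\end{equation*}
together with the analogous bound for $g$ with $\sigma$ replaced by $\tau$.

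For (ii), I will use the Cauchy--Schwarz submultiplicativity $|ab|\leq 2^{n/2}|a||b|$ on $\mathbb{R}_n$, which follows from the expansion $ab=\sum_C\bigl(\sum_{A\triangle B=C}\varepsilon_{A,B}a_Ab_B\bigr)e_C$ together with $|\{(A,B):A\triangle B=C\}|=2^n$. Applied to $f\star_L g=(f_0g_0-f_1g_1)+j(f_1g_0+f_0g_1)$, and using once more that $L_j$ is an isometry so that no extra factor arises in front of the $j$-part,
\begin{equation*}
|(f\star_L g)(x)|\leq 2^{n/2}\bigl(|f_0|+|f_1|\bigr)\bigl(|g_0|+|g_1|\bigr)\leq 2^{(n+4)/2}\Vert f\Vert_{\varrho,\sigma}\Vert g\Vert_{\varrho,\tau}\exp\bigl((\sigma+\tau)|x|^{\varrho(|x|)}\bigr).
\end{equation*}
The case of real $x=x_0$ is immediate since \eqref{CCondmon} forces $f_1(x_0,0)=g_1(x_0,0)=0$, reducing the star product to the ordinary Clifford product $f(x_0)g(x_0)$, which is bounded by submultiplicativity alone with the better constant $2^{n/2}$. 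Multiplying by $\exp(-(\sigma+\tau)|x|^{\varrho(|x|)})$ and taking the supremum over $x\in\mathbb{R}^{n+1}$ then gives the stated inequality.

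The main technical subtlety is the isometry property of $L_j$ for $j\in\mathbb{S}$; without it, the leading $j$ in $j(f_1g_0+f_0g_1)$ would contribute an additional factor $2^{n/2}$ via the submultiplicativity bound, and the final constant would deteriorate from $2^{(n+4)/2}$ to $2^{n+2}$. Beyond this, everything is routine bookkeeping with the triangle inequality.
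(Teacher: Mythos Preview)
Your proof is correct and follows essentially the same approach as the paper's: both decompose $f$ and $g$ via \eqref{Eq_f_decomposition}, recover $f_0,f_1$ from the values $f(u\pm jv)$ and bound them by $\Vert f\Vert_{\varrho,\sigma}\exp(\sigma|x|^{\varrho(|x|)})$, and then combine the four product terms using the Clifford submultiplicativity $|ab|\leq 2^{n/2}|a||b|$. Your version is in fact more careful: you make explicit that left multiplication by $j\in\mathbb{S}$ is a Euclidean isometry of $\mathbb{R}_n$, which is exactly what is needed both to conclude $|f_1|=|jf_1|$ and to avoid an extra $2^{n/2}$ from the $j$-prefactor in $j(f_1g_0+f_0g_1)$; the paper uses this implicitly without comment.
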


\begin{proof}
Due to Definition \ref{SHolDefMON}, the function $f\in\mathcal{S\!M}(\mathbb{R}^{n+1})$ does admit the decomposition
\begin{equation*}
f(u+jv)=f_0(u,v)+jf_1(u,v),\qquad u,v\in\mathbb{R},\,j\in\mathbb{S},
\end{equation*}
where for any arbitrary $j\in\mathbb{S}$, the functions $f_0,f_1$ are given by
\begin{equation*}
f_0(u,v)=\frac{f(u+jv)+f(u-jv)}{2}
\end{equation*}
and
\begin{equation*}
f_1(u,v)=J\frac{f(u-jv)-f(u+jv)}{2},
\end{equation*}
for any $u,v\in\mathbb{R}$. Hence, since $f\in A_{\rho,\sigma}$, the functions $f_0,f_1$ admit the estimates
\begin{align*}
|f_0(u,v)|&\leq\Vert f\Vert_{\rho,\sigma}\exp\Big(\sigma(u^2+v^2)^{\frac{1}{2}\rho\big((u^2+v^2)^{\frac{1}{2}}\big)}\Big), \\
|f_1(u,v)|&\leq\Vert f\Vert_{\rho,\sigma}\exp\Big(\sigma(u^2+v^2)^{\frac{1}{2}\rho\big((u^2+v^2)^{\frac{1}{2}}\big)}\Big).
\end{align*}
The same obviously holds true for the decomposition $g(u+jv)=g_0(u,v)+jg_1(u,v)$, i.e.
\begin{align*}
|g_0(u,v)|&\leq\Vert g\Vert_{\rho,\tau}\exp\Big(\tau(u^2+v^2)^{\frac{1}{2}\rho\big((u^2+v^2)^{\frac{1}{2}}\big)}\Big), \\
|g_1(u,v)|&\leq\Vert g\Vert_{\rho,\tau}\exp\Big(\tau(u^2+v^2)^{\frac{1}{2}\rho\big((u^2+v^2)^{\frac{1}{2}}\big)}\Big).
\end{align*}
Altogether, for every $x\in\mathbb{R}^{n+1}$ with the decomposition $x=u+jv$, we then get for the star product
\begin{align*}
&|(f\star g)(x)| \\
&=\Big|f_0(u,v)g_0(u,v)-f_1(u,v)g_1(u,v)+j\big(f_0(u,v)g_1(u,v)+f_1(u,v)g_0(u,v)\big)\Big| \\
&\leq 2^{\frac{n+4}{2}}\Vert f\Vert_{\rho,\sigma}\Vert g\Vert_{\rho,\tau}\exp\Big(\sigma(u^2+v^2)^{\frac{1}{2}\rho\big((u^2+v^2)^{\frac{1}{2}}\big)}\Big) \\
&\hspace{6cm}\times\exp\Big(\tau(u^2+v^2)^{\frac{1}{2}\rho\big((u^2+v^2)^{\frac{1}{2}}\big)}\Big) \\
&=2^{\frac{n+4}{2}}\Vert f\Vert_{\rho,\sigma}\Vert g\Vert_{\rho,\tau}\exp\Big((\sigma+\tau)|x|^{\rho(|x|)}\Big),
\end{align*}
where $x\in\mathbb{R}^{n+1}$ and the factor $2^{\frac{n}{2}}$ in the second lines comes from the fact that for two Clifford numbers $a,b\in\mathbb{R}_n$ the modulus of the product estimates as $|ab|\leq 2^{\frac{n}{2}}|a||b|$.
\end{proof}

The results of this section will be used to characterize continuous homomorphisms in terms of differential operators in the sense that will be specified in the next section.

\section{Differential operators, representations of continuous homomorphisms} \label{SEC3}

In this section we study continuous homomorphism from $A_{\varrho_1}$ and $A_{\varrho_2}$ and those from $A_{\varrho_1,+0}$ and $A_{\varrho_2,+0}$ where $\varrho_i(r)$ ($i=1,2$) are two proximate orders for positive orders $\rho_i=\lim_{r\to\infty}\varrho_i(r)>0$, satisfying
\begin{equation}\label{supernova}
r^{\varrho_1(r)}=O(r^{\varrho_2(r)}),\quad\text{as }r\to\infty.
\end{equation}

\begin{definition}\label{dnova1}
Let $\varrho_i$ ($i=1,2$) be two proximate orders for orders $\rho_i>0$ satisfying \eqref{supernova}. We take normalization $\hat{\varrho}_1$ of $\varrho_1$ as in Remark \ref{bem_normalization} and we define $G_{\hat{\varrho}_1,q}$ as in \eqref{Eq_Gl}. We denote by $\mathbf{D}_{\varrho_1\to\varrho_2}$ the set of all formal right linear differential operator $P$ of the form
\begin{equation*}
P=\sum_{\ell\in\mathbb{N}_0}u_\ell\star_L\partial_{x_0}^\ell
\end{equation*}
where the sequence $\{u_\ell\}_{\ell\in\mathbb N_0}\subset A_{\varrho_2}$ satisfies
\begin{equation}\label{enova1}
\forall\lambda>0,\,\exists\sigma>0,\,\exists C>0,\,\forall \ell\in\mathbb{N}_0,\,\Vert u_\ell\Vert_{\varrho_2,\sigma}\leq C\frac{G_{\hat{\varrho}_1,\ell}}{\ell!}\lambda^\ell.
\end{equation}
We denote by $\mathbf{D}_{\varrho_1\to\varrho_2,0}$ the set of all formal right linear differential operator $P$ of the form
\begin{equation*}
P=\sum_{\ell\in\mathbb{N}_0}u_\ell\star_L\partial_{x_0}^\ell
\end{equation*}
where the sequence $\{u_\ell\}_{\ell\in\mathbb N_0}\subset A_{\varrho_2}$ satisfies
\begin{equation}\label{enova2}
\forall\sigma>0,\,\exists\lambda>0,\,\exists C>0,\,\forall \ell\in\mathbb{N}_0,\,\Vert u_\ell\Vert_{\varrho_2,\sigma}\leq C\frac{G_{\hat{\varrho}_1,\ell}}{\ell!}\lambda^\ell.
\end{equation}
Note that in the latter case, each $u_\ell$ belongs to $A_{\varrho_2,+0}$.
\end{definition}

For the following remark see also \cite[Remark $4.4$]{AIO20}.

\begin{remark}\label{rnova1}
By adding $\ln(c)/\ln(r)$ for a constant $c>0$ (with a suitable modification near $r=0$) to a proximate order $\varrho_2(r)$ with order $\rho_2$, we get a new proximate order $\tilde{\varrho}_2(r)$ for the same order $\rho_2$ satisfying
\begin{equation*}
\tilde{\varrho}_2(r)=\varrho_2(r)+\ln(c)/\ln(r)
\end{equation*}
for $r>1$, that is,
\begin{equation*}
r^{\tilde{\varrho}_2(r)}=cr^{\varrho_2(r)},
\end{equation*}
eventually. Then $\Vert\cdot\Vert_{\tilde{\varrho}_2,\sigma}$ and $\Vert\cdot\Vert_{\varrho_2,c\sigma}$ become equivalent norms for $\sigma>0$, and the spaces $A_{\tilde{\varrho}_2}$ and $A_{\tilde{\varrho}_2,+0}$ are homeomorphic to $A_{\varrho_2}$ and $A_{\varrho_2,+0}$ respectively. By taking $c$ sufficiently large, we can take $\tilde{\varrho}_2$ as
\begin{equation*}
\varrho_1(r)\leq\tilde{\varrho}_2(r),\qquad r\geq r_0
\end{equation*}
for a suitable $r_0$, and we can choose normalizations $\hat{\varrho}_1$ of $\varrho_1$ and $\hat{\varrho}_2$ of $\tilde{\varrho}_2$ as

\begin{enumerate}
\item[(i)] $\hat{\varrho}_1(r)\leq\hat{\varrho}_2(r)$,\qquad $r\geq 0$.
\end{enumerate}

Since a proximate order and its normalization define equivalent norms, we have

\begin{enumerate}
\item[(ii)] $\Vert\cdot\Vert_{\hat{\varrho}_2,\sigma}$ and $\Vert\cdot\Vert_{\varrho_2,c\sigma}$ are equivalent for any $c>0$,
\item[(iii)] $\mathbf{D}_{\varrho_1\to\varrho_2}=\mathbf{D}_{\hat{\varrho}_1\to\hat{\varrho}_2}$, $\mathbf{D}_{\varrho_1\to\varrho_2,0}=\mathbf{D}_{\hat{\varrho}_1\to\hat{\varrho}_2,0}$.
\end{enumerate}

Note further that Theorems \ref{main1} and \ref{main2} below are not affected by the replacement of $\varrho_1$ and $\varrho_2$ by $\hat{\varrho}_1$ and $\hat{\varrho}_2$.
\end{remark}

\begin{theorem}\label{main1}
Let $\varrho_1,\varrho_2$ be two proximate orders satisfying \eqref{supernova}. Then all continuous linear operators $T:A_{\varrho_1}\to A_{\varrho_2}$ are characterized by operators of the form
\begin{equation}\label{Eq_T_rho}
T:=\sum\limits_{\ell=0}^\infty u_\ell\star_L\frac{\partial^\ell}{\partial x_0^\ell},
\end{equation}
with coefficients $\{u_\ell\}_{\ell\in\mathbb{N}}\subseteq\mathcal{S\!M}(\mathbb{R}^{n+1})$ satisfying
\begin{equation}\label{Eq_un_rho}
\forall\varepsilon>0,\,\exists\sigma,C>0: \Vert u_\ell\Vert_{\varrho_2,\sigma}\leq C\frac{G_{\varrho_1,\ell}}{\ell!}\varepsilon^\ell,
\end{equation}
i.e., $\mathbf{D}_{\varrho_1\to\varrho_2}$ is the set of all continuous operators from $A_{\varrho_1}$ to $A_{\varrho_2}$.
\end{theorem}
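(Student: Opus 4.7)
The plan is to establish both directions: \emph{sufficiency}, that every operator of the form \eqref{Eq_T_rho} with coefficients satisfying \eqref{Eq_un_rho} defines a continuous map $A_{\varrho_1}\to A_{\varrho_2}$, and \emph{necessity}, that every continuous such operator arises in this way.

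For sufficiency, I would take $f\in A_{\varrho_1,\tau}$ and chain three estimates: Lemma \ref{lnova1} to obtain $\Vert\partial_{x_0}^\ell f\Vert_{\varrho_1,k\tau}\leq C(\tau)\Vert f\Vert_{\varrho_1,\tau}\,\ell!(2k\tau)^{\ell/\rho_1}/G_{\varrho_1,\ell}$; the growth hypothesis \eqref{supernova} to pass through the continuous inclusion $A_{\varrho_1,k\tau}\hookrightarrow A_{\varrho_2,Mk\tau}$ for some $M>0$; and Lemma \ref{l1106231654} to control the star product. Combining these gives
\begin{equation*}
\Vert u_\ell\star_L\partial_{x_0}^\ell f\Vert_{\varrho_2,\sigma+Mk\tau}\;\leq\;C''\,\Vert f\Vert_{\varrho_1,\tau}\,\bigl(\lambda(2k\tau)^{1/\rho_1}\bigr)^\ell.
\end{equation*}
For fixed $\tau$, choosing $\lambda$ in \eqref{Eq_un_rho} strictly smaller than $(2k\tau)^{-1/\rho_1}$ produces a convergent geometric series, yielding both the well-definedness of $Tf$ in $A_{\varrho_2}$ and the continuity bound $\Vert Tf\Vert_{\varrho_2,\sigma+Mk\tau}\leq C'''\Vert f\Vert_{\varrho_1,\tau}$.

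For necessity, given a continuous $T$ I would define
\begin{equation*}
u_\ell\;:=\;\sum_{k=0}^\ell\frac{(-1)^{\ell-k}}{k!(\ell-k)!}\,T(x^k)\star_L x^{\ell-k}\;\in\;A_{\varrho_2},
\end{equation*}
as a finite sum of star products. The motivation is that if $T=\sum_\ell u_\ell\star_L\partial_{x_0}^\ell$, then using that the monomials $x^m$ are intrinsic (so $u_\ell\star_L x^m$ reduces to pointwise multiplication), the action on $x^n$ yields the triangular system $T(x^n)=\sum_{\ell\leq n}\frac{n!}{(n-\ell)!}u_\ell\star_L x^{n-\ell}$, inverted by the formula above via the identity $\sum_{\ell=k}^n\binom{n-k}{\ell-k}(-1)^{\ell-k}=\delta_{k,n}$. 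Consequently the operator $\widetilde{T}:=\sum_\ell u_\ell\star_L\partial_{x_0}^\ell$ coincides with $T$ on every monomial. Once the $u_\ell$ are shown to satisfy \eqref{Eq_un_rho}, the sufficiency direction implies $\widetilde{T}$ is continuous; the density of Fueter polynomials in $A_{\varrho_1}$ from Proposition \ref{psupersupernova1} together with continuity of $T$ and $\widetilde{T}$ then forces $T=\widetilde{T}$ on all of $A_{\varrho_1}$.

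The main technical obstacle is verifying \eqref{Eq_un_rho} for these $u_\ell$. Since $A_{\varrho_1}$ is a (DFS)-space, the continuity of $T$ provides, for every $\sigma_2>0$, constants $\sigma_1,C_1>0$ with $\Vert T(g)\Vert_{\varrho_2,\sigma_2}\leq C_1\Vert g\Vert_{\varrho_1,\sigma_1}$ for every $g\in A_{\varrho_1,\sigma_1}$. Applying this with $g=x^k$ and invoking Lemma \ref{lsupersupernova1} yields $\Vert T(x^k)\Vert_{\varrho_2,\sigma_2}\leq C_2 G_{\varrho_1,k}/(\sigma_1')^{k/\rho_1}$ for any $\sigma_1'<\sigma_1$. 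Estimating $\Vert x^{\ell-k}\Vert_{\varrho_2,\sigma_3}$ analogously by Lemma \ref{lsupersupernova1}, bounding each star product appearing in the definition of $u_\ell$ by Lemma \ref{l1106231654}, and invoking Remark \ref{rnova1} to arrange $\varrho_1\leq\varrho_2$ so that $G_{\varrho_2,m}\leq C_0^m G_{\varrho_1,m}$ and the submultiplicativity in Lemma \ref{lsupersupernova2} becomes applicable, the binomial sum $\sum_{k=0}^\ell\binom{\ell}{k}(\cdots)$ collapses into a factor $2^\ell$, and one arrives at an estimate of the form
\begin{equation*}
\Vert u_\ell\Vert_{\varrho_2,\sigma}\;\leq\;C\,\frac{G_{\varrho_1,\ell}}{\ell!}\,\Bigl(2C_0\max\bigl((\sigma_1')^{-1/\rho_1},(\sigma_3')^{-1/\rho_2}\bigr)\Bigr)^\ell.
\end{equation*}
Given any prescribed $\varepsilon>0$, choosing $\sigma_1',\sigma_3'$ sufficiently large makes the base of the exponential at most $\varepsilon$, and the corresponding $\sigma=\sigma_2+\sigma_3$ and $C$ provide exactly the constants required by \eqref{Eq_un_rho}.
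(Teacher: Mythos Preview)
Your overall strategy coincides with the paper's: same two directions, same definition of the $u_\ell$, same combinatorial inversion, same density argument via Proposition~\ref{psupersupernova1}, and the same chain of Lemmas~\ref{lnova1}, \ref{lsupersupernova1}, \ref{lsupersupernova2}, \ref{l1106231654} for the estimates.

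There is, however, a genuine slip in the necessity step. You invoke continuity of $T$ as ``for every $\sigma_2>0$ there exist $\sigma_1,C_1>0$ with $\Vert T(g)\Vert_{\varrho_2,\sigma_2}\leq C_1\Vert g\Vert_{\varrho_1,\sigma_1}$''. This quantifier order is wrong for maps between (DFS)-spaces and is false in general (e.g.\ if $\varrho_1=\varrho_2$ and $T$ is star-multiplication by a fixed nonconstant entire function, then for $\sigma_2$ small no such $\sigma_1$ exists). The correct statement---and the one the paper uses, citing Grothendieck---is: for every $\sigma_1>0$ there exist $\sigma_2,C_1>0$ with the bound. This matters because your closing step ``choose $\sigma_1'$ sufficiently large'' requires that $\sigma_1$ be at your disposal; with the correct quantifier order you pick $\sigma_1$ (hence $\sigma_1'<\sigma_1$) as large as you like to force the base below $\varepsilon$, and then $\sigma_2$ (hence the final $\sigma$) is whatever continuity hands you. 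With your stated direction the argument does not close.

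A minor simplification: you estimate $\Vert x^{\ell-k}\Vert_{\varrho_2,\sigma_3}$ via Lemma~\ref{lsupersupernova1} to get $G_{\varrho_2,\ell-k}$ and then appeal to a comparison $G_{\varrho_2,m}\leq C_0^mG_{\varrho_1,m}$. The paper avoids this detour by using $\varrho_1\leq\varrho_2$ (after Remark~\ref{rnova1}) to pass from the $\varrho_2$-norm to the $\varrho_1$-norm \emph{before} applying Lemma~\ref{lsupersupernova1}, so that $G_{\varrho_1,\ell-k}$ appears directly and Lemma~\ref{lsupersupernova2} applies without any extra constant.
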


\begin{proof}
Due to Remark \ref{rnova1} we may assume that $\varrho_1(r)\leq\varrho_2(r)$ for every $r\geq 0$, which implies $A_{\varrho_1,\sigma}\subseteq A_{\varrho_2,\sigma}$ for every $\sigma>0$ and
\begin{equation*}
\Vert f\Vert_{\varrho_2,\sigma}\leq\Vert f\Vert_{\varrho_1,\sigma},\qquad f\in A_{\varrho_1,\sigma}.
\end{equation*}
\textit{Step 1}.  For the first implication, let $T$ be an operator of the form \eqref{Eq_T_rho} with coefficients $\{u_\ell\}_{\ell\in\mathbb{N}}$ satisfying \eqref{Eq_un_rho}. Let $\varepsilon>0$, then by \eqref{Eq_un_rho} there exists $\sigma,C>0$ such that
\begin{equation*}
\Vert u_\ell\Vert_{A_{\varrho_2,\sigma}}\leq C\frac{G_{\varrho_1,\ell}}{\ell!}\varepsilon^\ell,\qquad\ell\in\mathbb{N}.
\end{equation*}
Moreover, for $f\in A_{\varrho_1}$ one has $f\in A_{\varrho_1,\tau}$ for some $\tau>0$ and we can estimate
\begin{align*}
\Vert Tf\Vert_{\varrho_2,\sigma+2^{\rho_1+1}\tau}&\leq\sum\limits_{\ell=0}^\infty\Vert u_\ell\star_L\partial^\ell_{x_0}f\Vert_{\varrho_2,\sigma+2^{\rho_1+1}\tau} \\
&\leq 4\sum\limits_{\ell=0}^\infty\Vert u_\ell\Vert_{\varrho_2,\sigma}\Vert\partial^\ell_{x_0}f\Vert_{\varrho_2,2^{\rho_1+1}\tau} \\
&\leq 4C\sum\limits_{\ell=0}^\infty\frac{G_{\varrho_1,\ell}}{\ell!}\varepsilon^\ell\Vert\partial^\ell_{x_0}f\Vert_{\varrho_1,2^{\rho_1+1}\tau} \\
&\leq 4C\sum\limits_{\ell=0}^\infty\frac{G_{\varrho_1,\ell}}{\ell!}\varepsilon^\ell C(\tau)\frac{\ell!(2^{\rho_1+2}\tau)^{\frac{\ell}{\rho_1}}}{G_{\varrho_1,\ell}}\Vert f\Vert_{\varrho_1,\tau} \\
&=4CC(\tau)\Vert f\Vert_{\varrho_1,\tau}\sum\limits_{\ell=0}^\infty\big(\varepsilon(2^{\rho_1+2}\tau)^{\frac{1}{\varrho_1}}\big)^\ell.
\end{align*}
Choosing $\varepsilon<(2^{\rho_1+2}\tau)^{-\frac{1}{\varrho_1}}$, the right hand side is finite and we showed that $T$ is bounded as an operator from $A_{\varrho_1,\tau}$ to $A_{\varrho_2,\sigma+2^{\rho_1+1}\tau}$. Since $f$ was arbitrary, this also proves the continuity of $T$ as an operator from $A_{\rho_1} $ to $A_{\rho_2}$ according to the Definition \ref{defi_Arho0}.

\medskip

\textit{Step 2}. For the second implication, let $T:A_{\varrho_1}\to A_{\varrho_2}$ be an everywhere defined continuous operator. Then, thanks to the theory of locally convex spaces, there exists for every $\tau>0$ some $\tau'>0$, such that
\begin{equation}\label{fsupernova1}
\Vert Tf\Vert_{\varrho_2,\tau'}\leq C_\tau\Vert f\Vert_{\varrho_1,\tau},\qquad f\in A_{\varrho_1,\tau},
\end{equation}
(see \cite[Chap 4, Part 1, 5, Corollary 1]{G73}). Let us now define the functions
\begin{equation}\label{d1106231645}
u_\ell(x):=\frac{1}{\ell!}\sum_{k=0}^\ell{\ell\choose k}T(x^k)\star_L(-x)^{\ell-k}.
\end{equation}
For any $\varepsilon>0$ choose $\tau_0:=(\frac{2}{\varepsilon})^{\rho_1}$ and any $\tau>\tau_0$. Then the functions $u_\ell$ admit the estimate
\begin{align*}
\Vert u_\ell\Vert_{\varrho_2,\tau'+\tau }&\leq\frac{1}{\ell!}\sum\limits_{k=0}^\ell{\ell\choose k}\Vert T(x^k)\star_L(-x)^{\ell-k}\Vert_{\varrho_2,2^{\rho_1+1}(\tau'+\tau)} \\
&\leq\frac{2^{\frac{\ell+4}{2}}}{\ell!}\sum_{k=0}^\ell{\ell\choose k}\Vert T(x^k)\Vert_{\varrho_2,\tau'}\Vert x^{\ell-k}\Vert_{\varrho_2,\tau} \\
&\leq\frac{C(\tau)2^{\frac{\ell+4}{2}}}{\ell!}\sum_{k=0}^\ell{\ell\choose k}\Vert x^k\Vert_{\varrho_1,\tau}\Vert x^{\ell-k}\Vert_{\varrho_1,\tau} \\
&\leq\frac{C(\tau)C(\tau,\tau_0)2^{\frac{\ell+4}{2}}}{\ell!\tau_0^{\frac{\ell}{\varrho_1}}}\sum_{k=0}^\ell{\ell\choose k}G_kG_{\ell-k} \\
&\leq\frac{C(\tau)C(\tau,\tau_1)2^{\frac{\ell+4}{2}}G_\ell}{n!\tau_0^{\frac{\ell}{\varrho_1}}}2^\ell=\frac{C(\tau) C(\tau,\tau_1)2^{\frac{\ell+4}{2}}G_\ell}{\ell!}\varepsilon^\ell.
\end{align*}
This estimate shows that the functions $u_\ell$ satisfy the assumptions \eqref{Eq_un_rho}. Next consider a function $f(x)=\sum\limits_{m=0}^Mx^ma_m$ be a polynomial. Then
\begin{align}
\sum_{\ell=0}^\infty u_\ell(x)\star_L\partial_{x_0}^\ell f(x)&=\sum_{\ell=0}^\infty\sum\limits_{m=0}^\infty u_\ell(x)\star_L\partial_{x_0}^\ell x^ma_m \notag \\
&=\sum_{\ell=0}^\infty\sum\limits_{m=\ell}^\infty\sum_{k=0}^\ell{\ell\choose k}{m\choose\ell}T(x^k)\star_L(-x)^{\ell-k}\star_Lx^{m-\ell}a_m \notag \\
&=\sum_{\ell=0}^\infty\sum\limits_{m=\ell}^\infty\sum_{k=0}^\ell(-1)^{\ell-k}{n\choose k}{m\choose \ell}T(x^k)\star_Lx^{m-k}a_m \notag \\
&=\sum\limits_{m=0}^\infty\sum\limits_{k=0}^m\sum_{\ell=0}^{m-k}(-1)^\ell{m-k\choose \ell}\,{m\choose k}T(x^k)\star_Lx^{m-k}a_m \notag \\
&=\sum\limits_{m=0}^\infty\sum\limits_{k=0}^m\delta_{m-k,0}{m\choose k}T(x^k)\star_Lx^{m-k}a_m \notag \\
&=\sum\limits_{m=0}^\infty T(x^m)a_m=T\left(\sum\limits_{m=0}^\infty x^ma_m\right)=Pf(x). \label{Eq_P_representation}
\end{align}
Since the polynomials are dense by Lemma \ref{psupersupernova1} and the operator $T$ is continuous, the same result holds for any function $f\in A_{\varrho_1}$.
\end{proof}

\begin{theorem}\label{main2}
Let $\varrho_1,\varrho_2$ be two proximate orders satisfying \eqref{supernova}. Then all continuous linear operators $T:A_{\varrho_1,+0}\to A_{\varrho_2,+0}$ are characterized by operators of the form
\begin{equation}\label{Eq_T_rhoplus}
T:=\sum\limits_{\ell=0}^\infty u_\ell \star_L\frac{\partial^\ell}{\partial x_0^\ell},
\end{equation}
with coefficients $\{ u_\ell\}_{\ell\in\mathbb N} \subseteq\mathcal{S\!M}(\mathbb{R}^{n+1})$ satisfying
\begin{equation}\label{Eq_enova2}
\forall\sigma>0,\,\exists\varepsilon,C>0: \Vert u_\ell\Vert_{\varrho_2,\sigma}\leq C\frac{G_{\varrho_1,\ell}}{\ell!}\varepsilon^\ell,
\end{equation}
i.e., $\mathbf{D}_{\varrho_1\to\varrho_2,0}$ is the set of all continuous operators from $A_{\varrho_1,+0}$ to $A_{\varrho_2,+0}$.
\end{theorem}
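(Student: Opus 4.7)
The strategy follows the two-step template of Theorem \ref{main1}, with the crucial adjustment that the quantifier order in \eqref{Eq_enova2} is swapped compared to \eqref{Eq_un_rho}, reflecting the projective-limit (FS) topology on $A_{\varrho,+0}$ versus the inductive-limit (DFS) topology on $A_\varrho$. Throughout I would use Remark \ref{rnova1} to assume $\varrho_1\le\varrho_2$ pointwise and to work with normalized proximate orders, which yields the comparison $\|g\|_{\varrho_2,\sigma}\le\|g\|_{\varrho_1,\sigma}$ freely used below.

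For the sufficiency direction, let $P=\sum_{\ell}u_\ell\star_L\partial_{x_0}^\ell\in\mathbf{D}_{\varrho_1\to\varrho_2,0}$. I would show that for every $\sigma_2>0$ there exist $\tau>0$ and $C_{\sigma_2}>0$ with $\|Pf\|_{\varrho_2,\sigma_2}\le C_{\sigma_2}\|f\|_{\varrho_1,\tau}$ for all $f\in A_{\varrho_1,+0}$. Concretely, I would fix $\sigma_2$, apply \eqref{Eq_enova2} at the value $\sigma=\sigma_2/2$ to obtain $\lambda,C>0$, and then choose $\tau>0$ small enough that both $2^{\rho_1+1}\tau\le\sigma_2/2$ and $\lambda(2^{\rho_1+2}\tau)^{1/\rho_1}<1$. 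The computation of Step 1 in the proof of Theorem \ref{main1}, using Lemma \ref{l1106231654} for the star product and Lemma \ref{lnova1} to control $\|\partial_{x_0}^\ell f\|_{\varrho_1,2^{\rho_1+1}\tau}$, then produces a convergent geometric series and the desired bound; note that $f\in A_{\varrho_1,+0}\subseteq A_{\varrho_1,\tau}$ for the chosen $\tau$. Sequential continuity of $P$ as a map into $A_{\varrho_2,+0}$ follows directly from the linear estimate.

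For the necessity direction, let $T:A_{\varrho_1,+0}\to A_{\varrho_2,+0}$ be continuous. The projective-limit topology implies that for every $\sigma>0$ there exist $\tau(\sigma)>0$ and $C(\sigma)>0$ with $\|Tf\|_{\varrho_2,\sigma}\le C(\sigma)\|f\|_{\varrho_1,\tau(\sigma)}$ for every $f\in A_{\varrho_1,+0}$. I would define the coefficients $u_\ell$ by the same formula \eqref{d1106231645}, fix an arbitrary $\sigma_0>0$, decompose $\sigma_0=\sigma+\sigma'$, apply the continuity bound to $\|T(x^k)\|_{\varrho_2,\sigma}$, estimate $\|x^k\|_{\varrho_1,\tau(\sigma)}$ and $\|x^{\ell-k}\|_{\varrho_1,\sigma'}$ by Lemma \ref{lsupersupernova1}, and collapse the binomial sum through the submultiplicativity $G_{\varrho_1,k}G_{\varrho_1,\ell-k}\le G_{\varrho_1,\ell}$ of Lemma \ref{lsupersupernova2} together with $\sum_{k=0}^{\ell}\binom{\ell}{k}=2^\ell$. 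The output is an estimate
\begin{equation*}
\|u_\ell\|_{\varrho_2,\sigma_0}\le C'\,\frac{G_{\varrho_1,\ell}}{\ell!}\,\lambda^\ell
\end{equation*}
with $\lambda=\lambda(\sigma_0)$, which is exactly the quantifier pattern of \eqref{Eq_enova2}. The operator identity $T=\sum_{\ell}u_\ell\star_L\partial_{x_0}^\ell$ is verified on polynomials by the same telescoping-binomial calculation as \eqref{Eq_P_representation} and extended to all of $A_{\varrho_1,+0}$ by density of polynomials (Proposition \ref{psupersupernova1}) combined with continuity of both sides (the right side is continuous by the already-proven sufficiency direction).

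The principal obstacle is the quantifier bookkeeping in the sufficiency step. In Theorem \ref{main1} the type $\tau$ is pinned down by $f$ and one is free to choose $\varepsilon$ as small as needed to close the geometric series; here the order is inverted, so one must first fix the target $\sigma_2$, extract $\lambda$ from \eqref{Eq_enova2}, and only then shrink $\tau$. This inversion is precisely where the projective-limit structure of $A_{\varrho_1,+0}$ is essential: because $f\in A_{\varrho_1,+0}$ belongs to every $A_{\varrho_1,\tau}$ with $\tau>0$, one may freely shrink $\tau$ \emph{after} $\lambda$ has been determined, and this single observation is what closes the argument.
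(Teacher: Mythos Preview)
Your proposal is correct and follows essentially the same approach as the paper: the sufficiency step fixes the target norm, extracts $\lambda$ from \eqref{Eq_enova2}, then shrinks $\tau$ to close the geometric series via Lemmas \ref{l1106231654} and \ref{lnova1}; the necessity step invokes the (FS)-continuity estimate, defines $u_\ell$ by \eqref{d1106231645}, and bounds $\|u_\ell\|_{\varrho_2,\sigma}$ through Lemmas \ref{lsupersupernova1} and \ref{lsupersupernova2} exactly as the paper does. Your explicit remark that the density extension requires continuity of the right-hand side (furnished by the already-proven sufficiency direction) is a point the paper leaves implicit.
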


\begin{proof}
Due to Remark \ref{rnova1} we may assume that $\varrho_1(r)\leq\varrho_2(r)$ for every $r\geq 0$, which implies $A_{\varrho_1,\sigma}\subseteq A_{\varrho_2,\sigma}$ for every $\sigma>0$ and
\begin{equation*}
\Vert f\Vert_{\varrho_2,\sigma}\leq\Vert f\Vert_{\varrho_1,\sigma},\qquad f\in A_{\varrho_1,\tau}.
\end{equation*}
We proceed in two steps.

\textit{Step 1}.  For the first implication let $T$ be an operator of the form \eqref{Eq_T_rhoplus} with coefficients $\{u_\ell\}_{\ell\in\mathbb{N}}$ satisfying \eqref{Eq_enova2}. Let $\sigma>0$. Then with the coefficients $\varepsilon,C>0$ from \eqref{Eq_enova2} we choose $0<\tau<\frac{1}{2^{\rho_1+2}\varepsilon^{\rho_1}}$. Then we obtain the estimate
\begin{align*}
\sum_{\ell\in\mathbb N_0}\Vert u_\ell\star_L\partial^\ell_{x_0}f\Vert_{\varrho_2,\sigma+2^{\rho_1+1}\tau}&\leq 2^{\frac{n+4}{2}}\sum_{\ell\in\mathbb N_0}\Vert u_\ell\Vert_{\varrho_2,\sigma}\Vert\partial^\ell_{x_0}f\Vert_{\varrho_2,2^{\rho_1+1}\tau} \\
&\leq 2^{\frac{n+4}{2}}C(\eta,\sigma,\tau)\sum_{\ell\in\mathbb N_0}\Vert u_\ell\Vert_{\varrho_2,\sigma}\Vert\partial^\ell_{x_0}f\Vert_{\varrho_1,2^{\rho_1+1}\tau} \\
&\leq C'(\eta,\sigma,\tau)\sum_{\ell\in\mathbb N_0} \frac{G_{\varrho_1,\ell}\varepsilon^\ell}{\ell!}\Vert f\Vert_{\varrho_1,\tau}\frac{\ell!}{G_{\rho_1,\ell}}(2^{\rho_1+2}\tau)^{\frac{\ell}{\rho_1}} \\
&=C'(\eta,\sigma,\tau)\Vert f\Vert_{\varrho_1,\tau}\sum_{\ell\in\mathbb N_0}\varepsilon^\ell(2^{\rho_1+2}\tau)^{\frac{l}{\rho_1}},
\end{align*}
for $f\in A_{\varrho_1,\tau}$. Note that the last sum is finite due to the choice of $\tau$. This implies that $P:A_{\varrho_1,+0}\to A_{\varrho_2,+0}$ is a continuous operator.

\medskip

\textit{Step 2}. For the second implication let $P:A_{\varrho_1,+0}\to A_{\varrho_2,+0}$ be a continuous operator. Then for every $\varepsilon>0$ there exist $\tau>0$ and $C (\varepsilon)>0$, such that $P:A_{\varrho_1,\tau}\to A_{\varrho_2,\varepsilon}$ is continuous with
\begin{equation}\label{e1106231704}
\Vert Pf\Vert_{\varrho_2,\varepsilon}\leq C(\varepsilon) \Vert f\Vert_{\varrho_1,\tau},\qquad f\in A_{\varrho_1,\tau},
\end{equation}
(see \cite[Chap 4, Part 1, 5, Corollary 1]{G73}). Defining the functions $u_\ell$ as in \eqref{d1106231645}, they admit the estimate
\begin{equation}\label{enovissima3}
\begin{split}
&\Vert u_\ell\Vert_{\varrho_2,\sigma}\leq 2^{\frac{n+4}{2}} \sum_{k\leq\ell}\frac{\Vert x^{\ell-k}\Vert_{\varrho_2,\sigma/2}\Vert P(x^k)\Vert_{\varrho_2,\sigma/2}}{(\ell-k)!k!} \\
&\leq\sum_{k\leq\ell}\frac{\Vert x^{\ell-k}\Vert_{\varrho_1,\sigma/2}C(\tau_1)\Vert x^k\Vert_{\varrho_1,\tau_1}}{(\ell- k)!k!} \\
&\leq\sum_{k\leq\ell}\frac{C(\tau_0,\sigma'/2)\tau_0^{-(\ell-k)/\rho_1}G_{\varrho_1,\ell-k}C(\tau_1)C(\tau_0,\tau_1)\tau_0^{-k/\rho_1}G_{\varrho_1,k}}{(\ell- k)!k!} \\
&\leq C(\tau_1)C(\tau_0,\sigma/2)C(\tau_0,\tau_1)\sum_{k\leq\ell}{\ell\choose k}\frac{G_{\varrho_1,\ell}}{\ell!}\tau_0^{-\ell/\rho_1} \\
&=C(\tau_1)C(\tau_0,\sigma/2)C(\tau_0,\tau_1)\frac{G_{\varrho_1,\ell}}{\ell!}2^\ell\tau_0^{-\ell/\rho_1}.
\end{split}
\end{equation}
Here we used Lemma \ref{l1106231654} at the first inequality, \eqref{e1106231704} at the second with $\varepsilon=\sigma/2$, Lemma \ref{lsupersupernova1} twice with $0<\tau_0<\sigma/2$ and $0<\tau_0<\tau_1$ at the third, and Lemma \ref{lsupersupernova2} at the fourth. Therefore, by defining
\begin{equation*}
\lambda:=2\tau_0^{-1/\rho_1},
\end{equation*}
we have
\begin{equation*}
\Vert u_\ell\Vert_{\varrho_2,\sigma}\leq C'\frac{G_{\varrho_1,\ell}}{\ell!}\lambda^\ell
\end{equation*}
for any $\ell$. Since $\sigma>0$ can be chosen arbitrarily, the functions $u_\ell$ satisfy all the conditions in \eqref{Eq_enova2}. Moreover, the operator $P$ admits the representation
\begin{equation*}
P=\sum_{\ell=0}^\infty u_\ell\star_L\partial_{x_0}^\ell
\end{equation*}
in the same way as in \eqref{Eq_P_representation} for polynomials. Since the polynomials are dense due to Lemma \ref{psupersupernova1} and the operator $T$ is continuous, the same result holds for any function $f\in A_{\varrho_1,+0}$. Thus $P\in\mathbf{D}_{\varrho_1\to\varrho_2,0}$.
\end{proof}

\end{document}